\newtheorem{lemma}{Lemma}[section]
\newtheorem{theorem}[lemma]{Theorem}
\newtheorem{proposition}[lemma]{Proposition}
\newtheorem{corollary}[lemma]{Corollary}
\newtheorem{definition}[lemma]{Definition}
\newcommand{\bea}{\begin{eqnarray*}}
\newcommand{\eea}{\end{eqnarray*}}
\newcommand{\zz}[1]{}
\begin{document}

\thanks{$^{2}$Supported partially by FAPESP of Brazil, Grant 2011/22285-1, University of São Paulo, Brazil.}

\title{An Exact Sequence for Generalized String Links Over Surfaces}
\author[Juliana Roberta Theodoro de Lima]{Juliana Roberta Theodoro de Lima $^{1}$ }
\address{Mathematics Institute, Federal University of Alagoas,\\ Campus A. C. Simões, Maceio, Lourival Melo Motta Avenue, no number\\ CEP: 57072-970}
\email[Juliana Roberta Theodoro de Lima]{juliana.lima@im.ufal.br} 

\keywords{braid groups, homotopy groups, generalized string links, presentation of  braids and string link groups}

\maketitle

\begin{abstract}
    In this work we extend Goldberg result \cite{Goldberg} for generalized string links over closed, connected and orientable surfaces of genus $g \geq 1$, i.e., different from the sphere (up to link-homotopy).
\end{abstract}

\section{Introduction}

In $1947$, E. Artin introduced the study of braids with his pioneering paper called \emph{Theory of Braids} (see \cite{Artin}), which is directly related to knots and links theory. Although braids, links and knots had already been discussed earlier, Artin showed two important results for the theory: the presentation and representation theorems for the braid group on the disk, namely $B_{n}$, also known as the \emph{Artin Braid Group}. For our purposes here, we focus on the first result: a presentation of a group is a way to represent a group by generators and relations. The braid group is a group of equivalence classes, where the equivalence relation is isotopy (or, more formally, ambient isotopy). However, in the same paper Artin proposed the idea of homotopy braids: essentially, it is the same set divided into equivalence classes using the equivalence relation of homotopy. The operation (concatenation) remains the same among braids. Accordingly, he posed the following questions: would the homotopy braids on the disk have the same properties, group structure and presentation as braid groups? Otherwise, what are its differences?

Goldsmith \cite{Goldsmith} answered all these questions: in fact, she proved that the group structures are different, making it explicit when certain types of braids are not trivial up to isotopy but trivial up to homotopy, called \emph{Link Homotopically Trivial b
Braids}, namely, $H_{n}$. Furthermore, she provided a explicit description of $H_{n}$ and a presentation for homotopy braid groups on the disk, denoted by $\widehat{B}_{n}$, which is the quotient of $B_{n}$ by $H_{n}$. Homotopy has been discussed since the beginning of the formalization of the studies of braid groups presented by Artin. However, homotopy braid theory was formalized by Milnor some years after Artin's seminal paper in \cite{Milnor} and it has been extended with the works \cite{Habegger, Levine, Yurasovskaya}. Moreover, there is still a slight difference between the concepts of string links given in \cite{Milnor, Yurasovskaya} and of homotopy braids given in \cite{Goldsmith}: string links are pure braids, either on the disk or on surfaces, with the monotonicity requirement relaxed, whereas homotopy braids are braids on the disk (not necessarily pure) with the monotonicity requirement relaxed. Consequently, we see that the most recent works are restricted to the pure case and, therefore, was reasonable to inquire about the general case, as provided in \cite{Lima2}, namely, $\widehat{B}_{n}(M)$, the group of generalized string links over surfaces. Still in \cite{Lima2}, was provided a exact sequence relating the braid group, $B_{n}(M)$, and the generalized string links over surfaces, $\widehat{B}_{n}(M)$, giving us a important tool for the famous open question about the left orderability of $B_{n}(M)$ \cite{Rolfsen, Gonzales2}. 

In \cite{Lima2} we formalize the definitons: generalized string links are homotopy braids with non trivial permutation induced by its strands. For the particular case when the permutation induced by the strands of a generalized string link is trivial, we call simply string links \cite{Yurasovskaya, Lima2}. 

When we put exact sequences and presentations (finitely) of groups together, it is possible to obtain precious results in orderability theory for braids as we see in \cite{Rolfsen, Yurasovskaya, Lima, Lima2}. From this notion and considering the fact generalized string links is a recently object to be explored (although it is already a structure used in other areas beyond mathematics\cite{fisica}), we present a new exact sequence, relating the normal subgroup of $\widehat{B}_{n}(M)$, called the group of string links over surfaces (pure), namely, $\widehat{PB}_{n}(M)$, $\widehat{PB}_{n}$ and the direct product of fundamental group of $M$, namely, $\pi_{1}(M)^{n}$, extending the exact sequence given in \cite{Goldberg}.

This paper is organized as follows: in Section \ref{Section2} we state results about generalized string links over surfaces that will be useful for our purposes. In Section \ref{Section3} we provide the main result of this paper, the exact sequence and the proof of its well definition, extending \cite{Goldberg}.

\section{Homotopy Generalized String Links \label{Section2}}

\subsection{Braids over Surfaces}

\begin{definition}\label{definicaoproblema}\rm{\cite[p.431]{Gonzales}}
Let $M$ be a closed surface, not necessarily orientable, and let $\mathcal{P}= \{P_{1} , \ldots ,P_{n}\}$ be a set of $n$ distinct points of $M$. A geometric braid over $M$ based at $\mathcal{P}$ is an $n$-tuple $\gamma= ( \gamma_{1},\ldots,\gamma_{n})$ of paths, $\gamma_{i}: [0,1] \rightarrow M$, such that:
\begin{itemize}
\item[(1)] $\gamma_{i}(0) = P_{i}, \hbox{for all} \ i= 1,\ldots,n$,
\item[(2)] $\gamma_{i}(1) \in \mathcal{P}, \ \hbox{for all} \ i = 1,\ldots,n$,
\item[(3)] $\{\gamma_{1}(t),\ldots,\gamma_{n}(t)\} \;\hbox{are $n$ distinct points of $M$, for all} \ t \in [0,1]$.
\end{itemize}
For all $i= 1,\ldots, n$, we will call $\gamma_{i}$ the $i$-th strands (or strings) of $\gamma$.
\end{definition}

\begin{figure}[h]
\center
\includegraphics[scale=0.5]{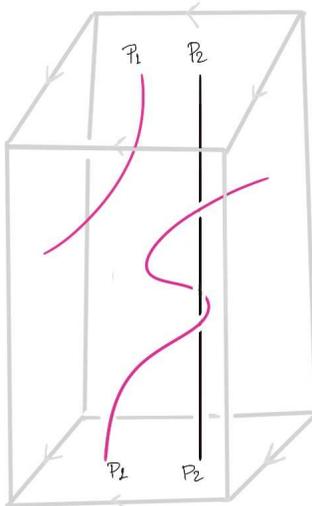}
\vspace{-1cm}
\caption{A braid "through the wall" $\beta$ over the $2$-dimensional torus. \label{lajm1}}
\end{figure}

We say two geometric braids $\beta$ and $\alpha$ are {\it{isotopic}} if there exists an ambient isotopy which deforms one to the other, with endpoints fixed during the deformation process. The set of all equivalence classes of geometric braids on $n$-strands on the surface $M$ forms a group called the \emph{braid group on $n$ strings on a surface $M$}, namely $B_{n}(M)$, equipped with the operation (product) called concatenation. The inverse of each braid $\gamma$ is given by the mirror reflection of $\gamma$. If the surface is the disk $\mathbb{D}$, then $B_{n}(\mathbb{D})$ is the Artin braid group $B_{n}$. 

\begin{figure}[h]
\center
\includegraphics[scale=0.4]{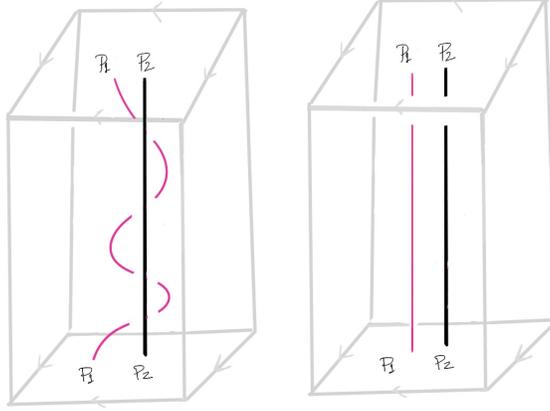}
\vspace{-0.5cm}
\caption{Two isotopic braids. \label{lajm5}}
\end{figure}

\begin{theorem}
{\rm\cite[Theorem 4.2]{Gonzales}}\label{Teorema1.1.3}
If $M$ is a closed, connected and orientable surface of genus $g \geq 1$ (different from the sphere), then $PB_{n}(M)$ admits the following presentation:
\begin{itemize}
\item[] {\rm\textbf{Generators}:} $\{a_{i, r}; \ 1\leq i \leq n, \  1\leq r \leq 2g\}\cup \{T_{j, k}; \  1\leq j < k \leq n\}$.
\item[] {\rm\textbf{Relations}:}
\end{itemize}
{\small
\noindent {\rm(PR1)}
$a_{n, 1}^{-1}a_{n, 2}^{-1}\cdots a_{n, 2g}^{-1}a_{n, 1}a_{n, 2}\cdots a_{n, 2g}= \displaystyle\prod_{i= 1}^{n-1}{T_{i, n-1}^{-1}T_{i, n}}$;

\noindent {\rm(PR2)}
$a_{i, r}A_{j, s}= A_{j, s}a_{i, r}, \hspace{5.6cm} 1\leq i<j \leq n; 1 \leq r, s \leq 2g; r\neq s$;

\noindent {\rm(PR3)}
$(a_{i, 1}\cdots a_{i, r})A_{j, r}(a_{i, r}^{-1}\cdots a_{i, 1}^{-1})A_{j, r}^{-1}= T_{i, j}T_{i, j-1}^{-1}$,
$\hspace{2.4cm} 1\leq i<j \leq n; 1\leq r \leq 2g$;

\noindent {\rm(PR4)}
$T_{i, j}T_{k, l}= T_{k, l}T_{i, j}, \hspace{3.4cm} 1\leq i<j<k<l \leq n \  \text{or} \ 1\leq i<k<l\leq j\leq n$;

\noindent {\rm(PR5)}
$T_{k, l}T_{i, j}T_{k, l}^{-1}= T_{i, k-1}T_{i, k}^{-1}T_{i, j}T_{i, l}^{-1}T_{i, k}T_{i, k-1}^{-1}T_{i, l}$,
$\hspace{3.1cm} 1\leq i<k \leq j<l \leq n$;

\noindent {\rm(PR6)}
$a_{i, r}T_{j, k}= T_{j, k}a_{i, r}, \hspace{2.8cm} 1\leq i<j<k \leq n \ \text{or} \ 1\leq j<k<i \leq n; 1\leq r \leq 2g $;

\noindent {\rm(PR7)}
$a_{i, r}(a_{j, 2g}^{-1}\cdots a_{j, 1}^{-1}T_{j, k}a_{j, 2g}\cdots a_{j, 1})= (a_{j, 2g}^{-1}\cdots a_{j, 1}^{-1}T_{j, k}a_{j, 2g}\cdots a_{j, 1})a_{i, r}$,
\hspace{0.05cm} $1\leq j<i\leq k \leq n$;

\noindent {\rm(PR8)}
$T_{j, n}= \left(\displaystyle\prod_{i= 1}^{j-1}{a_{i, 2g}^{-1}\cdots a_{i, 1}^{-1}T_{i, j-1}T_{i, j}^{-1}a_{i, 1}\cdots a_{i, 2g}}\right)a_{j, 1}\cdots a_{j, 2g}a_{j, 1}^{-1}\cdots a_{j, 2g}^{-1}$.}

\noindent Where 
\begin{eqnarray*}
A_{j, s}&=& a_{j, 1}\cdots a_{j, s-1}a_{j, s+1}^{-1}\cdots a_{j, 2g}^{-1}\\
T_{i,j}&=& \sigma_{i}\sigma_{i+1}\cdots\sigma_{j-2}\sigma_{j-1}^{2}\sigma_{j-2}\cdots\sigma_{i+1}\sigma_{i}
\end{eqnarray*}
\end{theorem}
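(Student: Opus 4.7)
The plan is to prove the presentation by induction on $n$ using the Fadell--Neuwirth fibration. Forgetting the last coordinate yields a locally trivial fibration
\[
M\setminus\{P_1,\ldots,P_{n-1}\}\hookrightarrow F_n(M)\xrightarrow{p} F_{n-1}(M),
\]
whose long exact sequence collapses to a short exact sequence
\[
1\longrightarrow\pi_1\bigl(M\setminus\{P_1,\ldots,P_{n-1}\}\bigr)\longrightarrow PB_n(M)\xrightarrow{p_*}PB_{n-1}(M)\longrightarrow 1.
\]
The fiber is homotopy equivalent to a wedge of circles, so its fundamental group is free of rank $2g+n-1$; as a basis I would take the $2g$ loops $a_{n,1},\ldots,a_{n,2g}$ around the handles of $M$ based at $P_n$, together with $n-1$ loops around $P_1,\ldots,P_{n-1}$, each realised in $PB_n(M)$ as the word $T_{i,n-1}^{-1}T_{i,n}$ via the explicit expression for the braid twist in terms of the Artin generators $\sigma_j$.

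Granting the inductive hypothesis for $PB_{n-1}(M)$, a presentation of $PB_n(M)$ is obtained by the standard recipe for a group extension: the generators are the fiber basis together with chosen lifts of the generators of the quotient; the relations are the fiber relations, each quotient relation lifted and set equal to the word in the fiber that corrects it, and one conjugation relation $qnq^{-1}=w(n)$ for every pair of a quotient generator $q$ and a fiber generator $n$. In this dictionary, PR4 and PR5 come directly from the inductive presentation of $PB_{n-1}(M)$; PR1 is the unique relation of the punctured surface group, obtained from the canonical relator of $\pi_1(M)$ by absorbing the small loops around the removed points; PR2, PR3, PR6 and PR7 encode the conjugation action of the lifted $a_{j,r}$ and $T_{j,k}$ on the fiber basis; and PR8 records how PR1 rewrites $T_{j,n}$ in terms of the remaining generators, a convenient derived relation rather than an independent one.

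The main obstacle is computing the conjugation action of $PB_{n-1}(M)$ on the free fiber $\pi_1(M\setminus\{P_1,\ldots,P_{n-1}\})$ with geometric precision. This requires a coherent system of fence paths joining the base points, so that a lift of $a_{j,r}$ or of a twist $T_{j,k}$ may be traced as an automorphism of the fiber whose values on generators match the right--hand sides of PR3, PR5 and PR7. In practice I would proceed by an Artin combing argument: draw the braid in question, push the $n$-th strand across the chosen fiber loops one at a time, and read off the resulting word. The bookkeeping is nontrivial because the right--hand sides involve products of the puncture loops $T_{i,j-1}^{-1}T_{i,j}$ conjugated by handle loops $a_{i,r}$, and verifying that every such conjugation closes up consistently --- i.e.\ that the displayed list of relations is complete and that PR8 really follows from PR1--PR7 --- is where the labour of the proof concentrates.
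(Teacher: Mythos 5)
This statement is not proved in the paper at all: it is quoted verbatim from Gonz\'alez-Meneses \cite[Theorem 4.2]{Gonzales}, so there is no internal proof to compare against. Your strategy --- induction on $n$ via the Fadell--Neuwirth fibration $M\setminus\{P_1,\ldots,P_{n-1}\}\hookrightarrow F_n(M)\to F_{n-1}(M)$, the resulting short exact sequence $1\to\pi_1(M\setminus\{P_1,\ldots,P_{n-1}\})\to PB_n(M)\to PB_{n-1}(M)\to 1$, and the standard presentation-of-an-extension recipe --- is exactly the approach of the cited source, so the plan is the right one.

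Two problems remain. First, a concrete error: removing $n-1$ points from a closed genus-$g$ surface gives Euler characteristic $2-2g-(n-1)$, so the fiber group is free of rank $2g+n-2$, not $2g+n-1$. The $2g$ handle loops together with the $n-1$ puncture loops are therefore not a free basis; they are a generating set satisfying the single surface relation, which is precisely what (PR1) records. Your text is internally inconsistent on this point, since you simultaneously call these loops a basis of a free group and then invoke ``the unique relation of the punctured surface group.'' Second, and more seriously, what you have written is a plan rather than a proof: the entire content of the theorem lies in the computations you defer --- determining the conjugation action of the lifted generators $a_{j,r}$ and $T_{j,k}$ on the fiber generators and checking that it yields exactly the right-hand sides of (PR2), (PR3), (PR5), (PR6), (PR7), verifying completeness of the relation list, and establishing the status of (PR8) (which you assert without argument to be derivable from the others). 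Until those verifications are carried out, the presentation is not established; as it stands the proposal identifies where the work lives but does not do it.
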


\vspace{-0.2cm}

\begin{figure}[h]
\center
\includegraphics[scale=0.4]{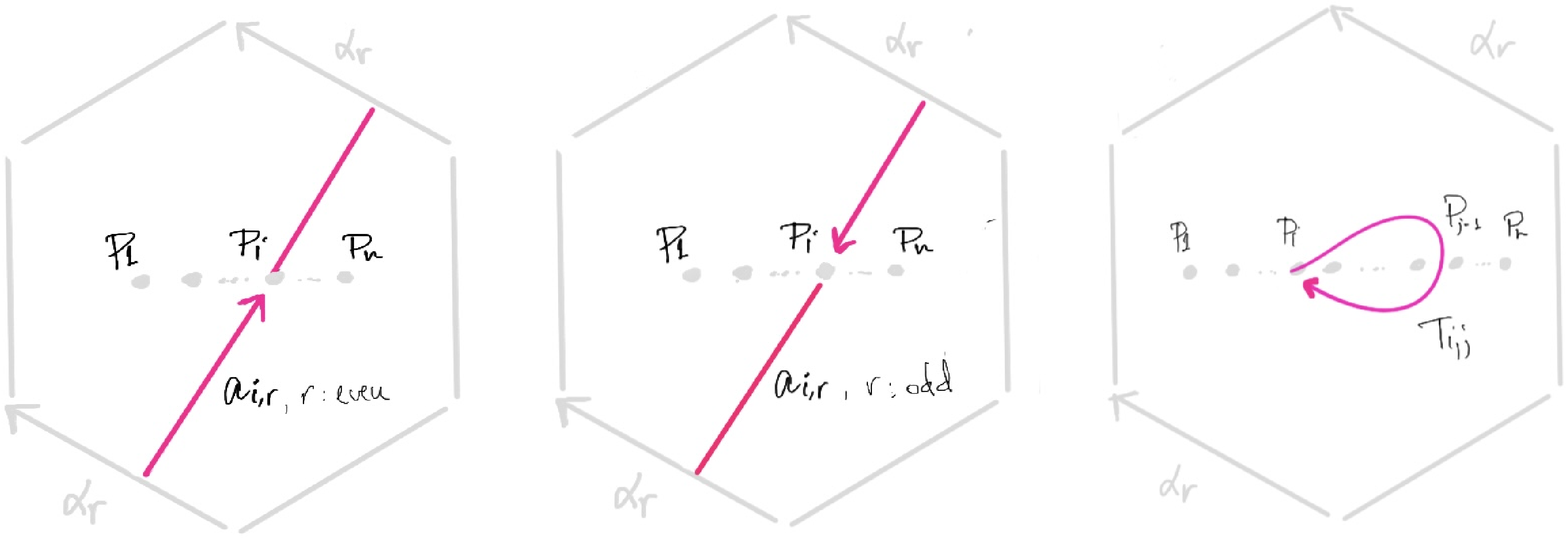}
\vspace{-0.6cm}
\caption{Generators of $PB_{n}(M)$. \label{lajm6}}
\end{figure}

\vspace{5cm}

\begin{theorem}
{\rm{\cite[Theorem 2.1]{Gonzales}}}\label{Teorema1.1.2}
If $M$ is a closed, orientable surface of genus $g \geq 1$, then $B_{n}(M)$ admits the following presentation:
\begin{itemize}
\item[] {\rm\textbf{Generators}:} $\sigma_{1},\ldots, \sigma_{n-1}, \mathit{a}_{1,1},\ldots,\mathit{a}_{1,2g}$.
\item[] {\rm\textbf{Relations}:}
\begin{itemize}
\item[{\rm(R1)}] $\sigma_{i}\sigma_{j}= \sigma_{j}\sigma_{i}, \hspace{10cm} |i-j|\geq 2$;
\item[{\rm(R2)}] $\sigma_{i}\sigma_{i+1}\sigma_{i} = \sigma_{i+1}\sigma_{i}\sigma_{i+1}, \hspace{7.6cm} 1\leq i \leq n-2$;
\item[{\rm(R3)}]
$\mathit{a}_{1,1}\cdots\mathit{a}_{1,2g}\mathit{a}_{1,1}^{-1}\cdots\mathit{a}_{1,2g}^{-1}=
\sigma_{1}\cdots\sigma_{n-2}\sigma_{n-1}^{2}\sigma_{n-2}\cdots\sigma_{1}$,
\item[{\rm(R4)}] $\mathit{a}_{1,r}A_{2, s}= A_{2, s}\mathit{a}_{1,r}, \hspace{7cm} 1\leq r, s \leq 2g;  \ r\neq s$;
\item[{\rm(R5)}] $(\mathit{a}_{1,1}\cdots\mathit{a}_{1,r})A_{2, r}=
\sigma_{1}^{2}A_{2, r}(\mathit{a}_{1,1}\cdots\mathit{a}_{1,r}), \hspace{5cm} 1\leq r \leq 2g$;
\item[{\rm(R6)}] $\mathit{a}_{1,r}\sigma_{i}= \sigma_{i}\mathit{a}_{1,r}, \hspace{8.1cm} 1\leq r \leq 2g; \  i\geq 2$.
\end{itemize}
\end{itemize}
\end{theorem}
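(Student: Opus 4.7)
The plan is to derive this presentation of $B_n(M)$ from the presentation of $PB_n(M)$ recorded in Theorem \ref{Teorema1.1.3}, combined with the short exact sequence
\begin{equation*}
1 \lra PB_n(M) \lra B_n(M) \lra S_n \lra 1,
\end{equation*}
where the surjection sends each $\sigma_i$ to the transposition $(i,i+1)$. The strategy is to replace the full generating set $\{a_{i,r}, T_{j,k}\}$ of $PB_n(M)$ by conjugates of the single family $\{a_{1,r}\}$ under products of the $\sigma_i$'s, and to show that R1--R6 encode all resulting consequences.

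First, I would verify geometrically that R1--R6 hold in $B_n(M)$. R1 and R2 are the classical Artin braid relations. R3 expresses the fact that the commutator product $a_{1,1}\cdots a_{1,2g}a_{1,1}^{-1}\cdots a_{1,2g}^{-1}$, traversing all handles of $M$ based at $P_1$, equals the full twist $\sigma_1\cdots\sigma_{n-2}\sigma_{n-1}^{2}\sigma_{n-2}\cdots\sigma_1$ of the first strand around all others; this is the genus-dependent surface relation. R4 and R5 are the specialization of PR2 and PR3 to the case $i=1$, $j=2$, while R6 holds because $a_{1,r}$ only moves the first strand.

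Second, let $G$ be the abstract group defined by this presentation. The first step produces a surjection $\varphi \colon G \ra B_n(M)$. For injectivity, I would introduce inside $G$ the elements
\begin{equation*}
a_{i,r} := (\sigma_{i-1}\cdots\sigma_1)\,a_{1,r}\,(\sigma_1^{-1}\cdots\sigma_{i-1}^{-1}), \qquad T_{i,j} := \sigma_i\sigma_{i+1}\cdots\sigma_{j-2}\sigma_{j-1}^{2}\sigma_{j-2}\cdots\sigma_{i+1}\sigma_i,
\end{equation*}
and let $N$ be the kernel of the natural map $G \ra S_n$ sending $\sigma_i \mapsto (i,i+1)$ and killing each $a_{1,r}$. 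A Reidemeister--Schreier computation on R1--R6, using the Schreier transversal of lifts of reduced words in the transpositions, should show that $N$ is generated by these $a_{i,r}$ and $T_{i,j}$ and that the rewritten relators coincide, modulo R1 and R2, with PR1--PR8 of Theorem \ref{Teorema1.1.3}. The Five Lemma applied to the two short exact sequences over $S_n$ then yields that $\varphi$ is an isomorphism.

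The main obstacle is precisely that Reidemeister--Schreier rewrite: deducing the non-abelian $PB_n(M)$ relations PR3, PR5 and PR7 from R1--R6 after substitution. These are intricate identities in the conjugation calculus of surface braids, and I would organize them by induction on the strand index $i$, peeling off one strand at a time so that each inductive step reduces to a verification inside $B_3(M)$ (or $B_2(M)$) modulo PR1. This is the technical core of the argument underlying \cite{Gonzales}.
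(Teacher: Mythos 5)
First, note that the paper does not prove this statement at all: Theorem \ref{Teorema1.1.2} is quoted verbatim from \cite[Theorem 2.1]{Gonzales} and used as a black box, so there is no internal proof to compare yours against. Judged on its own terms, your outline is the standard ``presentation of an extension'' scheme for $1 \rightarrow PB_{n}(M) \rightarrow B_{n}(M) \rightarrow \Sigma_{n} \rightarrow 1$, and the soft parts are fine: the geometric verification that R1--R6 hold, the well-definedness of $G \rightarrow \Sigma_{n}$, and the Five Lemma step (granted $N \cong PB_{n}(M)$) are all routine. One small caveat: $A_{2,s}$ in R4--R5 is not a generator but an abbreviation for a word in $\sigma_{1}$ and the $a_{1,r}$ (compare the explicit expression in Theorem \ref{Teorema4.4.1}), and your dictionary between the two presentations has to use that expression consistently.

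There are, however, two genuine gaps. The first is that essentially all of the mathematical content of the theorem is concentrated in the step you defer with ``should show'': that the Reidemeister--Schreier rewriting of R1--R6 over a Schreier transversal for $\Sigma_{n}$ produces exactly the relations PR1--PR8 among the elements $a_{i,r}$ and $T_{i,j}$, and nothing more. Without that computation carried out, the argument is a plan rather than a proof; your closing paragraph concedes this but offers only the shape of an induction, not the identities themselves. The second is a circularity in the logical order: in \cite{Gonzales} the presentation of $PB_{n}(M)$ (Theorem 4.2 there, i.e.\ Theorem \ref{Teorema1.1.3} here) is itself \emph{derived from} Theorem 2.1 by precisely this kind of Reidemeister--Schreier procedure, so you cannot take Theorem \ref{Teorema1.1.3} as your starting point unless you supply an independent proof of it. The proof of Theorem 2.1 in \cite{Gonzales} instead proceeds by induction on $n$ via the Fadell--Neuwirth type exact sequence for the subgroup of braids whose permutation fixes one puncture, together with the presentation-of-extensions machinery; as written, your proposal establishes the implication ``$PB_{n}(M)$ presentation $\Rightarrow$ $B_{n}(M)$ presentation'' while the only proof of the hypothesis on record runs in the opposite direction.
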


If $M$ is the unit disk, the presentation of Theorem $\ref{Teorema1.1.2}$ is reduced to Artin´s presentations given in \cite{Artin}.

\begin{figure}[h]
\center
\includegraphics[scale=0.45]{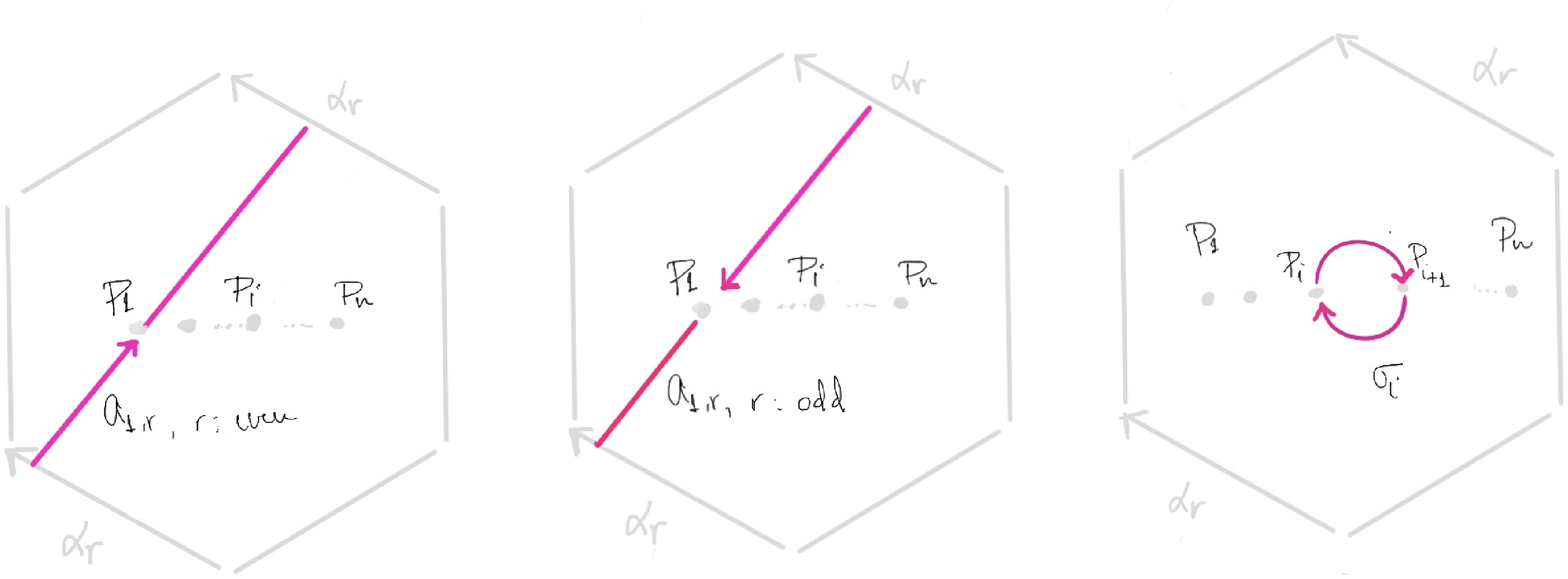}
\vspace{-0.5cm}
\caption{Generators of $B_{n}(M)$. \label{lajm10}}
\end{figure}

\subsection{Generalized String Links over Surfaces}

\begin{definition}
\cite[Definition $3.1$]{Lima2}\label{def3.1}
A \emph{generalized string link $\sigma$ on $n$ strands on a surface $M$} is a smooth or piecewise linear proper embedding $\sigma: \coprod_{i= 1}^{n}{I_{i}} \rightarrow M \times I,$ which fulfills the two following conditions:
\begin{itemize}
\item[{\rm(i)}] $\sigma|_{(I_{i}(0))}= (P_{i},0)$,
\item[{\rm(ii)}] $\sigma|_{(I_{i}(1))} \in \{(P_{1},1),\ldots,(P_{n},1)\}$,
\end{itemize}
where $I_{i}(t)=t\in I_{i}$, for all $t$ and for all $i=1,\ldots,n$.
\end{definition}

\begin{figure}[h!]
\center
\includegraphics[scale=0.35]{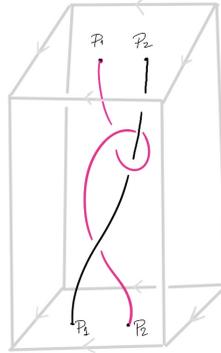}
\vspace{-0.5cm}
\caption{A generalized string link $\sigma$ over the $2$-dimensional torus. \label{lajm2}}
\end{figure}

Here, we orient the strands downwards from $M \times \{0\}$ to $M \times\{1\}$. Besides, an \emph{ambient isotopy} between generalized string links $\sigma$ and $\sigma^{\prime}$ is an orientation-preserving diffeomorphism of $M \times I$ which maps $\sigma$ onto $\sigma^{\prime}$, keeping the boundary $M \times \{0,1\}$ point-wise fixed and is isotopic to the identity, relative to $M \times \{0,1\}$.
When $\sigma|_{(I_{i}(0))}= (P_{i},1)$, we just obtain a string link, i.e., a string link is a pure generalized string link described in \cite{Yurasovskaya}. When the surface in question is the disk $\mathbb{D}$, we have the homotopy braids described in \cite{Goldsmith}.

\begin{definition}\label{linkhomotopia1}\cite{Habegger} 
Two generalized string links $\sigma$ and $\sigma^{\prime}$ are \emph{link-homotopic} if there is a homotopy of the strings in $M \times I$, fixing $M \times \{0,1\}$ and deforming $\sigma$ to $\sigma^{'}$, such that the images of different strings remain disjoint during the deformation. During the course of deformation, each individual strand is allowed to pass through itself but not through other strands. 
\end{definition}

Equivalently, 

\begin{definition}\label{linkhomotopia2}\cite{Habegger,Milnor,Levine}
We say \emph{link-homotopy} is an equivalence relation on generalized string links that is generated by a finite sequence of ambient isotopies of $M \times I$ fixing $M \times \{0,1\}$, and local \emph{crossing changes} of arcs from the \emph{same strand} of a generalized string link called {\it{link-homotopy moves}}. 
\end{definition}

\begin{figure}[h]
\center
\includegraphics[scale=0.45]{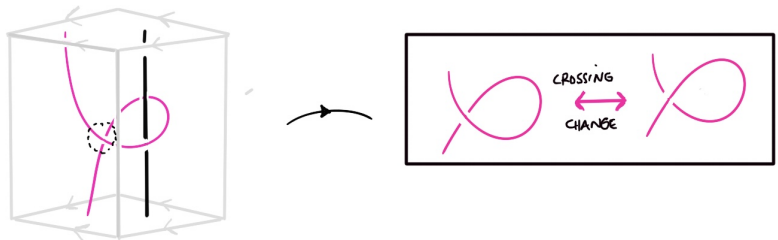}
\vspace{-0.3cm}
\caption{A crossing change. \label{lajm3}}
\end{figure}

\vspace{5cm}

The property of the local crossing changes consists in considering the undercrossing and the overcrossing as the same crossing (in the same strand), as shown in Figure  \ref{lajm3}, i.e, a crossing change for generalized string links remains the same as defined for string links.

\begin{figure}[h]
\center
\includegraphics[scale=0.4]{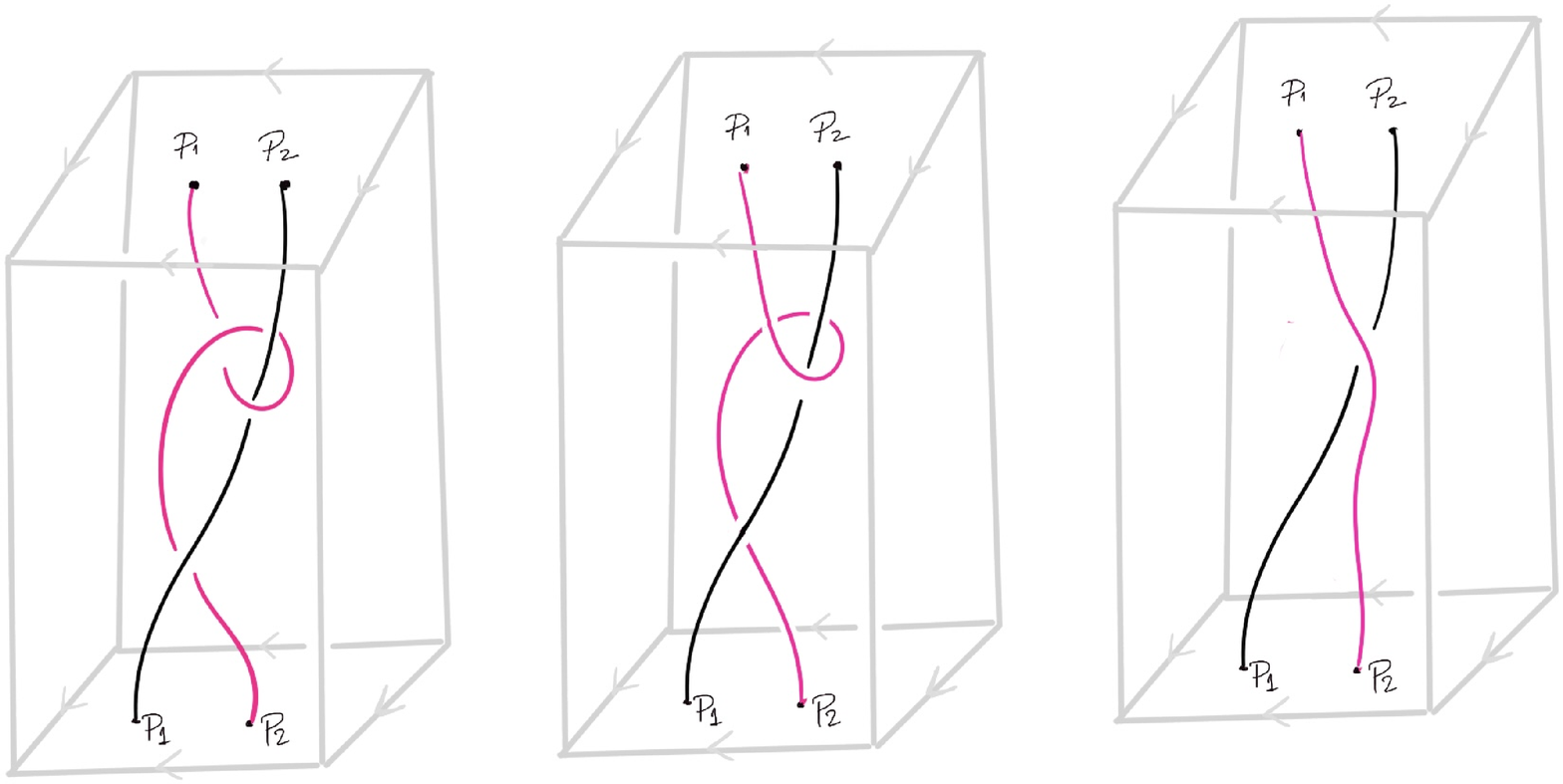}
\caption{A generalized string link $\sigma$ link homotopic to a braid up to a crossing change. \label{lajm4}}
\end{figure}

Define $H_{n}(M)$ the set of all pure braids in $PB_{n}(M)$ which are link-homotopic to the trivial braid. This set is called the set of the \emph{link-homotopically trivial braids}. In symbols:
$$H_{n}(M)= \{\beta \in PB_{n}(M); \;\;\beta \sim 1\},$$ where $\sim$ denotes the link-homotopy equivalence relation.\\

\begin{proposition}{\rm\cite[p.6]{Lima2}}\label{Corolario2.1}
$H_{n}(M)$ is the smallest normal subgroup of $PB_{n}(M)$ generated by $(LH1)$. In symbols:
$$H_{n}(M)= \langle \{[t_{i,j},t_{i,j}^{h}], \ 1 \leq i < j \leq n, \ h \in \mathbb{F}(2g+n-i)\} \rangle^{N},$$
where $\langle \quad \rangle^{N}$ denotes the normal closure.\\
\end{proposition}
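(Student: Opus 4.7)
The plan is to prove the two inclusions between $H_n(M)$ and the normal closure $N:=\langle\{[t_{i,j},t_{i,j}^{h}]\}\rangle^{N}$ separately, with the easy direction verified by a geometric interpretation of the generators and the non-trivial one argued by induction on the number of strands via the Fadell--Neuwirth short exact sequence.

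For the inclusion $N\subseteq H_n(M)$ I first observe that $H_n(M)$ is already a normal subgroup of $PB_n(M)$, since link-homotopy is a congruence with respect to concatenation and inversion: if $\beta\sim 1$, then $\alpha\beta\alpha^{-1}\sim \alpha\alpha^{-1}=1$ for every $\alpha\in PB_n(M)$. It therefore suffices to show that each generator $[t_{i,j},t_{i,j}^{h}]$ is itself link-homotopically trivial. Geometrically, $t_{i,j}$ is a meridian loop of the $i$-th strand around the $j$-th strand, and $t_{i,j}^{h}$ is the same meridian conjugated by a word $h$ representing a path in the complement of the remaining strands. The commutator $[t_{i,j},t_{i,j}^{h}]$ is then realized by a braid whose $i$-th strand traverses two parallel meridian loops in opposite orders, producing exactly one self-crossing of the $i$-th strand which is removed by a single (LH1) move.

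For the opposite inclusion $H_n(M)\subseteq N$ I plan to induct on $n$. The base case $n=1$ is immediate, since $PB_1(M)=\pi_1(M)$ and link-homotopy of a single strand coincides with ordinary homotopy of loops, giving $H_1(M)=1$. For the inductive step I use the Fadell--Neuwirth fibration
\[
1\longrightarrow \pi_1\bigl(M\setminus\{P_1,\ldots,P_{n-1}\}\bigr)\longrightarrow PB_n(M)\xrightarrow{\,p\,}PB_{n-1}(M)\longrightarrow 1,
\]
where $p$ forgets the $n$-th strand and the kernel is a free group whose natural generators are the meridians $t_{n,j}$ together with the $a_{n,r}$. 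Given $\beta\in H_n(M)$, the braid $p(\beta)$ remains link-homotopically trivial because forgetting a strand respects link-homotopy; by the inductive hypothesis $p(\beta)$ lies in the analogous normal closure for $PB_{n-1}(M)$, and after lifting those generators through a section of $p$ I reduce to the case in which $\beta$ itself lies in the free kernel. At that point the problem reduces to a single loop of the last strand in a punctured surface, and the classical Milnor--Goldsmith argument identifies the link-homotopically trivial elements of such a free group as precisely the normal closure of the self-commutators $[t_{n,j},t_{n,j}^{h}]$.

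The main obstacle is the last step inside the free kernel: I must show that the subgroup of $\pi_1(M\setminus\{P_1,\ldots,P_{n-1}\})$ whose elements represent link-homotopically trivial self-loops of the $n$-th strand coincides with the normal closure \emph{inside $PB_n(M)$} (not merely inside the kernel) of the listed commutators. The subtlety is that conjugation in $PB_n(M)$ mixes the fibres of the Fadell--Neuwirth fibration, so I need to reconcile the normal closure computed globally with the one computed fibre-wise. This is exactly where the indexing $h\in\mathbb{F}(2g+n-i)$ — which reflects the free-group structure of the relevant punctured surface — enters, and the verification will rely on the explicit presentation of $PB_n(M)$ supplied by Theorem~\ref{Teorema1.1.3} to keep careful track of how the generators $a_{i,r}$ and $T_{j,k}$ act on the meridians $t_{n,j}$ by conjugation.
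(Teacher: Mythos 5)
The paper itself offers no proof of this proposition: it is imported verbatim from \cite[p.6]{Lima2}, so there is no in-paper argument to measure yours against, and I can only judge the proposal on its own terms. Your first inclusion is essentially fine: $H_n(M)$ is normal because link-homotopy is compatible with concatenation, and each $[t_{i,j},t_{i,j}^{h}]$ is link-homotopically trivial because both factors are loops of the same strand $i$, which may be slid past one another through self-crossings. (The claim that this costs ``exactly one self-crossing \ldots removed by a single (LH1) move'' is an overstatement --- in general several crossing changes on strand $i$ are needed --- but the substance is right.) Your overall strategy for the reverse inclusion, combing through the Fadell--Neuwirth sequence and inducting on $n$, is also the standard one, the one followed by Goldsmith for the disk and by Yurasovskaya and \cite{Lima2} for surfaces.

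The problem is that the crucial step of that induction is not proved, and you say so yourself. After subtracting a lift of $p(\beta)$ you are left with an element $k\in H_n(M)\cap\ker p$, and you must show $k$ lies in the normal closure $N$ taken in all of $PB_n(M)$. Invoking ``the classical Milnor--Goldsmith argument'' does not close this: Goldsmith's identification is carried out for the disk, where the kernel is free on the meridians $t_{j,n}$ alone and the quotient by the self-commutators is Milnor's reduced free group. Over a closed surface of genus $g\ge 1$ the kernel is free of rank $2g+n-2$, generated by the $t_{j,n}$ \emph{together with} the handle generators $a_{n,r}$ modulo the surface relation (PR8); the $a_{n,r}$ are not meridians, an element of the kernel can only be link-homotopically trivial if its image in $\pi_1(M)$ vanishes, and one must still verify that the link-homotopically trivial part of this kernel is normally generated by the listed commutators with the stated range $h\in\mathbb{F}(2g+n-i)$ --- which requires the explicit computation of how conjugation by the $a_{i,r}$ and $T_{j,k}$ from Theorem~\ref{Teorema1.1.3} acts on the meridians. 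That verification is the actual content of the proposition, and your outline defers it rather than supplies it. Separately, the appeal to ``a section of $p$'' should be dropped: the Fadell--Neuwirth sequence over a closed surface need not split, but you only need set-theoretic lifts of finitely many elements together with the observation that generators of the $(n-1)$-strand normal closure lift to generators of $N$, so that part is repairable.
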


\begin{proposition}{\rm\cite[Proposition 3.9]{Yurasovskaya}} \label{prop2.1}
Under concatenation $\widehat{PB}_{n}(M)$ is a group isomorphic to the quotient of the pure braid group $PB_{n}(M)$ by the subgroup of link-homotopically trivial braids $H_{n}(M)$: $$\widehat{PB}_{n}(M)= \frac{PB_{n}(M)}{H_{n}(M)}.$$
\end{proposition}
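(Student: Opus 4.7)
The plan is to construct a natural surjective group homomorphism $\Phi : PB_{n}(M) \to \widehat{PB}_{n}(M)$ with $\ker \Phi = H_{n}(M)$, and then to invoke the first isomorphism theorem. Every pure braid is in particular a pure generalized string link, namely one whose strands are monotone in the $I$-direction, so sending the ambient isotopy class of a pure braid $\beta$ to its link-homotopy class (viewed as a pure string link) defines a map $\Phi$. This map is well-defined because ambient isotopy rel $M \times \{0,1\}$ is a strictly finer equivalence relation than link-homotopy, and since the concatenation of two pure braids coincides with their concatenation as string links, $\Phi$ respects the operations.

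The essential content is surjectivity: every pure string link $\sigma$ on $M$ is link-homotopic to a pure braid. I would begin by placing a representative of $\sigma$ in general position so that the height function $h : \coprod_{i} I_{i} \to I$ induced by the projection $M \times I \to I$ has only finitely many non-degenerate critical points, all in the interior of the strands, no two at the same height. The goal is then to eliminate all local maxima and minima of $h$ one strand at a time. Given a local maximum of $\sigma_{i}$ sitting immediately above a local minimum of the same strand (with no intermediate critical point of $\sigma_{i}$), the two critical points can be cancelled by deforming the sub-arc of $\sigma_{i}$ between them so that it becomes monotone in $I$; this deformation may force $\sigma_{i}$ to pass through itself, which is precisely the self-crossing move permitted by Definition \ref{linkhomotopia2}. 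Because the cancellation is supported in an arbitrarily thin tubular neighbourhood of the sub-arc in question, it can be arranged to stay disjoint from the remaining $n-1$ strands, so the global link-homotopy condition is preserved throughout. After finitely many such cancellations every strand is monotone, so the representative has become a pure braid $\beta$ with $\Phi([\beta]) = [\sigma]$.

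For the kernel, $\Phi(\alpha) = \Phi(\beta)$ in $\widehat{PB}_{n}(M)$ holds precisely when $\alpha$ and $\beta$ are link-homotopic as pure string links, equivalently when $\alpha \beta^{-1}$ is link-homotopic to the trivial braid; by the very definition of $H_{n}(M)$ this is the condition $\alpha \beta^{-1} \in H_{n}(M)$. Therefore $\ker \Phi = H_{n}(M)$, and the first isomorphism theorem yields the asserted isomorphism and simultaneously endows $\widehat{PB}_{n}(M)$ with its group structure under concatenation. The main obstacle is the surjectivity step; its crux is the purely local cancellation of extrema via self-crossings, a technique well-established in the planar setting \cite{Goldsmith, Yurasovskaya} which extends to a closed orientable surface of positive genus without essential modification, since each cancellation occurs in an arbitrarily small neighbourhood of a short arc in $M \times I$ and is therefore insensitive to the global topology of $M$.
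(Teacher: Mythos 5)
The paper does not prove this proposition at all --- it is imported verbatim from Yurasovskaya \cite[Proposition 3.9]{Yurasovskaya} --- so your attempt can only be measured against the argument in that source. Your overall architecture (a surjective map $PB_{n}(M)\to\widehat{PB}_{n}(M)$ with kernel $H_{n}(M)$, followed by the first isomorphism theorem) is exactly the right one, and the well-definedness and kernel computations are fine. The entire weight of the proposition rests, as you say, on surjectivity: every string link over $M$ is link-homotopic to a pure braid.

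It is precisely there that your argument has a genuine gap. You cancel an adjacent max--min pair of the height function on a strand $\sigma_{i}$ by replacing the zigzag sub-arc $Z$ with a monotone arc, and you claim the whole operation ``is supported in an arbitrarily thin tubular neighbourhood of the sub-arc in question,'' hence misses the other strands by compactness. That claim is false: a thin tube around a non-monotone arc does \emph{not} contain a monotone arc joining its endpoints. If $Z$ descends to level $b$, ascends to level $a<b$, and descends again, then at each intermediate level the tube consists of three small disjoint disks, one around each branch of $Z$; a monotone replacement arc would have to jump from the first branch's disk to the third branch's disk while moving strictly downward, which it cannot do, since the branches only approach each other at the two critical points. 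Consequently the cancelling homotopy must sweep a genuine $2$-disk across the slab $M\times[a,b]$, and that disk can (and in general does) meet the other $n-1$ strands --- which is exactly the intersection that link-homotopy forbids. Handling those intersections is the actual content of the theorem, not a detail. The standard repair, which is the route taken in \cite{Habegger} for the disk and adapted in \cite{Yurasovskaya} for surfaces, is an inductive combing: first make $n-1$ of the strands monotone (by induction, transporting the last strand along the ambient isotopies and shrinking the self-crossing balls so they avoid it); their complement is then a fibre bundle over $I$ with fibre a punctured surface $F$, hence a product $F\times I$, and inside that product the remaining strand can be homotoped rel endpoints to a monotone arc $(\gamma(s),s)$ using only self-crossings, because every rel-endpoint homotopy class of arcs in $F\times I$ has a monotone representative. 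Without this (or an equivalent device for resolving the obstructions created by the other strands), the surjectivity step is not established.
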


\begin{theorem}\rm{\cite[Theorem 6.3]{Yurasovskaya}}\label{Teorema2.1.5}
Let $M$ be a closed, compact, connected and orientable surface of genus $g \geq 1$. The group of homotopy string links $\widehat{PB}_{n}(M)$ admits the presentation:
\begin{itemize}
\item[]{\bf{Generators:}} $\{a_{i,r}; \;\;1\leq i \leq n; \;\; 1 \leq r \leq 2g\}\cup \{t_{j,k};\;\; 1 \leq j < k \leq n\}.$
\item[]{\bf{Relations:}}

{\small
\noindent {\rm(LH1)} $[t_{i,j},t_{i,j}^{h}]= 1$, \hspace{8.4cm} $h \in \mathbb{F}(2g + n- i)$;

\noindent{\rm(PR1)} $a_{n, 1}^{-1}a_{n, 2}^{-1}\cdots a_{n, 2g}^{-1}a_{n, 1}a_{n, 2}\cdots a_{n, 2g}=
 \displaystyle\prod_{i= 1}^{n-1}{T_{i, n-1}^{-1}T_{i, n}}$;
 
\noindent{\rm(PR2)} $a_{i, r}A_{j, s}= A_{j, s}a_{i, r}, \hspace{2cm} 1\leq i<j \leq n, \; 1 \leq r \leq 2g;\; 1 \leq s \leq 2g-1; \;\; r\neq s$;

\noindent{\rm(PR3)} $(a_{i, 1}\cdots a_{i, r})A_{j, r}(a_{i, r}^{-1}\cdots a_{i, 1}^{-1})A_{j, r}^{-1}= T_{i, j}T_{i, j-1}^{-1}$, $\hspace{0.6cm} 1\leq i<j \leq n, \;\; 1\leq r \leq 2g-1$;

\noindent{\rm(PR4)} $T_{i, j}T_{k, l}= T_{k, l}T_{i, j}, \hspace{2.4cm} 1\leq i<j<k<l \leq n \;\; \text{or} \;\; 1\leq i<k<l\leq j\leq n$;

\noindent{\rm(PR5)} $T_{k, l}T_{i, j}T_{k, l}^{-1}= T_{i, k-1}T_{i, k}^{-1}T_{i, j}T_{i, l}^{-1}T_{i, k}T_{i, k-1}^{-1}T_{i, l}$,
$\hspace{2.2cm} 1\leq i<k \leq j<l \leq n$;

\noindent{\rm(PR6)} $a_{i, r}T_{j, k}= T_{j, k}a_{i, r}, \hspace{1.5cm} 1\leq i<j<k \leq n \;\; \text{or} \;\; 1\leq j<k<i \leq n, \;\; 1\leq r \leq 2g $;

\noindent{\rm(PR7)} $a_{i, r}(a_{j, 2g}^{-1}\cdots a_{j, 1}^{-1}T_{j, k}a_{j, 2g}\cdots a_{j, 1}) = (a_{j, 2g}^{-1}\cdots a_{j, 1}^{-1}T_{j, k}a_{j, 2g}\cdots a_{j, 1})a_{i, r}$, $1\leq j<i \leq k \\ \leq n$;

\noindent {\rm(PR8)} $T_{j,n}= \left(\displaystyle\prod_{i= 1}^{j-1}{a_{i, 2g}^{-1}\cdots a_{i, 1}^{-1}T_{i, j-1}T_{i, j}^{-1}a_{i, 1}\cdots a_{i, 2g}}\right)a_{j, 1}\cdots a_{j, 2g}a_{j, 1}^{-1}\cdots a_{j, 2g}^{-1}$;}
\end{itemize}
 where 
 \begin{eqnarray*}
 A_{j, s}&=& a_{j, 1}\cdots a_{j, s-1}a_{j, s+1}^{-1}\cdots a_{j, 2g}^{-1}\\
 T_{i,j}&=& t_{i,j}\cdots t_{i,i+1}.
 \end{eqnarray*}
\end{theorem}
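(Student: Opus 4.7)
The plan is to combine the presentation of $PB_{n}(M)$ given in Theorem \ref{Teorema1.1.3}, the identification $\widehat{PB}_{n}(M) \cong PB_{n}(M)/H_{n}(M)$ from Proposition \ref{prop2.1}, and the explicit description of $H_{n}(M)$ as a normal closure furnished by Proposition \ref{Corolario2.1}. The underlying algebraic principle is the standard fact that if a group $G$ admits a presentation $\langle X \mid R \rangle$ and $N = \langle S \rangle^{N}$ is the normal closure in $G$ of a subset $S$, then $G/N$ admits the presentation $\langle X \mid R \cup S \rangle$.

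First I would perform a Tietze transformation on the presentation of $PB_{n}(M)$. Introduce new generators $t_{i,j}$ for $1 \leq i < j \leq n$ via $t_{i,j} = T_{i,j} T_{i,j-1}^{-1}$, with the convention $T_{i,i} = 1$. This is invertible: one recovers $T_{i,j} = t_{i,j} t_{i,j-1} \cdots t_{i,i+1}$, matching the formula stated at the end of Theorem \ref{Teorema2.1.5}. Thus $\{a_{i,r}\} \cup \{t_{i,j}\}$ is still a generating set for $PB_{n}(M)$, and the relations (PR1)--(PR8) of Theorem \ref{Teorema1.1.3} can be transported verbatim to the new generating set, because they are written in terms of the symbols $T_{i,j}$ which are now read as abbreviations for the products $t_{i,j}\cdots t_{i,i+1}$.

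Next, I would quotient by $H_{n}(M)$. By Proposition \ref{Corolario2.1}, this normal subgroup is precisely $\langle\{[t_{i,j}, t_{i,j}^{h}] : 1\leq i<j\leq n,\ h\in\mathbb{F}(2g+n-i)\}\rangle^{N}$, which is the set of relations labelled (LH1). Adjoining (LH1) to the transported (PR1)--(PR8) and invoking Proposition \ref{prop2.1} produces exactly the presentation claimed. The closing observation is that the $t_{i,j}$ of Proposition \ref{Corolario2.1} admit a geometric interpretation as the Birman-type linking generators, for which the link-homotopy relation takes its clean commutator form.

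The main obstacle lies in verifying the identification of the algebraic elements $T_{i,j} T_{i,j-1}^{-1}$ produced by the Tietze transformation with the geometric generators $t_{i,j}$ referred to in Proposition \ref{Corolario2.1}; without this matching, the (LH1) relations would have to be reexpressed as conjugated commutators of the old $T$-generators, leading to an equivalent but far messier presentation. Once one checks, on a strand-by-strand basis using the geometric meaning of $T_{i,j}$ as a loop of strand $j$ around strands $i, i+1, \ldots, j-1$, that the difference $T_{i,j} T_{i,j-1}^{-1}$ isolates the contribution of strand $j$ encircling strand $i$ alone (i.e., the elementary generator $t_{i,j}$), the rest of the argument reduces to a routine bookkeeping check that the rewritten relations (PR1)--(PR8) together with (LH1) coincide with those displayed in the theorem.
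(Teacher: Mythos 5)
The paper itself offers no proof of this statement; it is imported verbatim from Yurasovskaya's thesis \cite[Theorem 6.3]{Yurasovskaya}, so there is no internal argument to compare against. That said, your strategy --- a Tietze transformation of the González-Meneses presentation of $PB_{n}(M)$ (Theorem \ref{Teorema1.1.3}) replacing the generators $T_{i,j}$ by $t_{i,j}=T_{i,j}T_{i,j-1}^{-1}$, followed by adjoining the normal generators of $H_{n}(M)$ from Proposition \ref{Corolario2.1} as relations and invoking Proposition \ref{prop2.1} together with the standard presentation of a quotient --- is the natural route and is essentially the argument of the cited source; the genuinely hard content is concentrated in Proposition \ref{Corolario2.1}, which you correctly treat as an input rather than something to reprove. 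Two details that your ``routine bookkeeping'' must actually absorb: first, the relations do not transport verbatim, since (PR2) and (PR3) in Theorem \ref{Teorema1.1.3} range over $s,r\leq 2g$ while Theorem \ref{Teorema2.1.5} restricts them to $\leq 2g-1$, so one must verify that the dropped instances are consequences of the remaining relations (e.g.\ via (PR1) and (PR8)); second, the identification of the algebraic element $T_{i,j}T_{i,j-1}^{-1}$ with the geometric generator $t_{i,j}$ appearing in Proposition \ref{Corolario2.1}, which you rightly single out as the point requiring a strand-level check. Neither issue undermines the approach.
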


\begin{figure}[h]
\center
\includegraphics[scale=0.4]{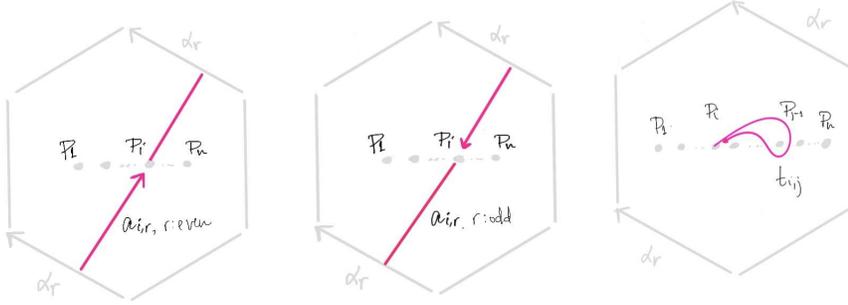}
\vspace{-0.2cm}
\caption{Generators of $\widehat{PB}_{n}(M)$.}
\end{figure}

\begin{proposition}{\rm\cite[Theorem 3.7]{Lima2}} \label{teo3.7}
Under concatenation $\widehat{B}_{n}(M)$ is a group isomorphic to the quotient of the pure braid group $PB_{n}(M)$ by the subgroup of link-homotopically trivial braids $H_{n}(M)$: $$\widehat{B}_{n}(M)= \frac{B_{n}(M)}{H_{n}(M)}.$$
\end{proposition}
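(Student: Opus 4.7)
The plan is to apply the first isomorphism theorem to a natural homomorphism $\Phi \colon B_n(M) \to \widehat{B}_n(M)$. I would first define $\Phi$ by noting that every braid is in particular a generalized string link in the sense of Definition \ref{def3.1} (the monotonicity and distinctness conditions for a braid are strictly stronger than a proper embedding with the same boundary behavior) and sending the isotopy class of a braid to the link-homotopy class of its associated generalized string link. The map is well defined on isotopy classes because ambient isotopies of braids are in particular link-homotopies, and it is a group homomorphism because the two concatenation operations coincide on representatives. Before proceeding I would also verify that $H_n(M)$ is normal in $B_n(M)$ (not just in $PB_n(M)$ as in Proposition \ref{Corolario2.1}): conjugation by an impure braid simply relabels the strands, and the property of being link-homotopic to the trivial pure braid is preserved by such relabeling, since link-homotopy depends only on the embedding up to ambient isotopy and same-strand crossing changes.

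The heart of the proof is surjectivity: every generalized string link $\sigma$ is link-homotopic to a braid. Here I would proceed in the spirit of Goldsmith's treatment for the disk and of Yurasovskaya's pure surface case (used in the proof of Proposition \ref{prop2.1}). Pick a regular projection of $\sigma$ and eliminate, one at a time, the local maxima and minima of the height function along each individual strand. The same-strand crossing changes allowed by link-homotopy (Definition \ref{linkhomotopia2}) provide exactly the flexibility needed to push a strand past a self-obstruction that would otherwise block monotonization. After finitely many such moves one obtains a monotonic representative, namely a braid $\beta \in B_n(M)$ with $\Phi(\beta) = [\sigma]$, proving surjectivity.

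For the kernel, observe that the strand-wise permutation induced on $\mathcal{P}$ is a link-homotopy invariant of generalized string links, since the boundary $M \times \{0,1\}$ is held pointwise fixed throughout any link-homotopy. Hence if $\beta \in \ker \Phi$, then $\beta$ induces the trivial permutation and thus $\beta \in PB_n(M)$; the definition of $H_n(M)$ then yields $\beta \in H_n(M)$. The reverse inclusion $H_n(M) \subseteq \ker \Phi$ is immediate from the definition of $H_n(M)$. Combined with surjectivity, the first isomorphism theorem delivers $\widehat{B}_n(M) \cong B_n(M)/H_n(M)$. The main obstacle is the surjectivity step: one must carry out the height-function simplification in a way that does \emph{not} create or destroy crossings between \emph{distinct} strands (which link-homotopy forbids), and on a surface of positive genus one must additionally keep track of how each strand winds around the handles. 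Formalizing this requires a careful induction on a suitable complexity measure of the projection rather than a purely algebraic manipulation; once that is in hand, the remaining steps are routine.
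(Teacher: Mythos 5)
The paper does not actually prove this proposition: it is imported verbatim from \cite[Theorem 3.7]{Lima2}, so there is no in-text argument to compare against. Your outline is, however, exactly the route taken in the cited sources (Goldsmith for the disk, Yurasovskaya's Proposition 3.9 --- quoted here as Proposition \ref{prop2.1} --- for the pure surface case, and \cite{Lima2} for the generalized case): define the natural map $\Phi\colon B_{n}(M)\to\widehat{B}_{n}(M)$, check that $H_{n}(M)$ is normal in all of $B_{n}(M)$ and not merely in $PB_{n}(M)$, prove that every generalized string link is link-homotopic to a genuine braid, identify the kernel using the link-homotopy invariance of the induced permutation, and invoke the first isomorphism theorem. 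Your kernel computation and your normality argument are correct as stated, and you are right that surjectivity is also what makes $\widehat{B}_{n}(M)$ a group in the first place (inverses of classes come from inverses of braid representatives).

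The one substantive issue is that the surjectivity step --- monotonization of a generalized string link using only ambient isotopy and same-strand crossing changes --- is precisely where all of the content of the theorem lives, and your proposal only gestures at it. You correctly flag the two dangers (one must not be forced through a crossing change between \emph{distinct} strands while removing local extrema of the height function, and on a surface of genus $g\geq 1$ the strands can wind around handles), but you do not supply the complexity measure or the induction that resolves them; in \cite{Yurasovskaya} and \cite{Lima2} this occupies the bulk of the proof. As a proof \emph{plan} your submission is structurally complete and faithful to the literature; as a proof it defers the theorem's only hard step to a citation-shaped placeholder. I would also note, for your benefit rather than as a criticism, that the statement as printed in the paper contains a slip --- it says ``quotient of the pure braid group $PB_{n}(M)$'' while the displayed formula correctly quotients the full group $B_{n}(M)$ --- and your reading of it as $B_{n}(M)/H_{n}(M)$ is the intended one.
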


\begin{theorem}\label{Teorema4.4.1}\cite[Theorem 3.9]{Lima2}
Let $M$ be a closed, orientable surface of genus $g\geq 1$. The group of link-homotopy classes of generalized string links over $M$, namely $\widehat{B}_{n}(M)$, admits the following presentation:
\begin{itemize}
\item[] {\bf{Generators:}} $\{a_{1,1},\ldots,a_{1,2g}\} \cup \{\sigma_{1},\ldots,\sigma_{n-1}\}$;
\item[] {\bf{Relations:}}
\begin{itemize}
\item[{\rm(LH)}] $[{t_{1,j}},{t^{h}_{1,j}}]=1 \hspace{8.5cm} h \in \mathbb{F}(2g+n-1)$;
\item[{\rm(R1)}] $\sigma_{i}\sigma_{j}= \sigma_{j}\sigma_{i}  \hspace{9.9cm} |i-j|\geq 2$;
\item[{\rm(R2)}] $\sigma_{i}\sigma_{i+1}\sigma_{i} = \sigma_{i+1}\sigma_{i}\sigma_{i+1}  \hspace{7.6cm} 1\leq i \leq n-2$;
\item[{\rm(R3)}]
$\mathit{a}_{1, 1}\cdots\mathit{a}_{1, 2g}\mathit{a}_{1, 1}^{-1}\cdots\mathit{a}_{1, 2g}^{-1}=
\sigma_{1}\cdots\sigma_{n-2}\sigma_{n-1}^{2}\sigma_{n-2}\cdots\sigma_{1}$
\item[{\rm(R4)}] $\mathit{a}_{1, r}A_{2, s}= A_{2, s}\mathit{a}_{1, r} \hspace{4.9cm} 1\leq r \leq 2g \ 1\leq s \leq 2g-1, \ r\neq s$;
\item[{\rm(R5)}] $(\mathit{a}_{1,1}\cdots\mathit{a}_{1, r})A_{2, r}=
\sigma_{1}^{2}A_{2, r}(\mathit{a}_{1, 1}\cdots\mathit{a}_{1, r}) \hspace{4.7cm} 1\leq r \leq 2g-1$;
\item[{\rm(R6)}] $\mathit{a}_{1, r}\sigma_{i}= \sigma_{i}\mathit{a}_{1, r} \hspace{8.3cm} 1\leq r \leq 2g; \  i\geq 2$;
\end{itemize}
\end{itemize}where:
\begin{eqnarray*} 
t_{1,j}&=& \sigma_{1}\cdots\sigma_{j-2}\sigma_{j-1}^{2}\sigma_{j-2}^{-1} \cdots \sigma_{1}^{-1}, \hspace{0.3cm} j=2,\ldots,n, \\ 
A_{2,s}&=&\sigma_{1}^{-1}(a_{1,1}\cdots a_{1,s-1}a_{1,s+1}^{-1}\cdots a_{1,2g}^{-1})\sigma_{1}^{-1},  \hspace{0.3cm} s=1,\ldots,2g-1.
\end{eqnarray*}
\end{theorem}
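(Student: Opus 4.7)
\bigskip
\noindent\textbf{Proof proposal.}

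The plan is to obtain the presentation of $\widehat{B}_n(M)$ as a quotient, exploiting the identification $\widehat{B}_n(M)=B_n(M)/H_n(M)$ from Proposition \ref{teo3.7}. By von Dyck's theorem, once $H_n(M)$ is known to be normal in the full braid group $B_n(M)$ (not merely in $PB_n(M)$), a presentation of the quotient is obtained by taking the presentation of $B_n(M)$ from Theorem \ref{Teorema1.1.2} and appending, as new relations, a set of normal generators of $H_n(M)$ inside $B_n(M)$. Thus I would structure the proof in three steps: (i) verify that $H_n(M)$ is normal in $B_n(M)$; (ii) starting from the $PB_n(M)$-normal generators given in Proposition \ref{Corolario2.1}, reduce to a smaller family of $B_n(M)$-normal generators, namely the $(LH)$ relations with $i=1$; (iii) translate everything into the chosen generating set $\{a_{1,1},\ldots,a_{1,2g},\sigma_1,\ldots,\sigma_{n-1}\}$.

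For step (i), I would use the geometric definition of link-homotopy (Definition \ref{linkhomotopia1}): conjugation in $B_n(M)$ by a non-pure braid $\beta$ corresponds to relabelling the strands of a pure braid, and since link-homotopy is an equivalence relation that treats every strand in the same way, a link-homotopically trivial pure braid remains link-homotopically trivial after such relabelling. Hence $\beta H_n(M)\beta^{-1}\subseteq H_n(M)$ for every $\beta\in B_n(M)$. For step (ii), the key observation is that in $B_n(M)$ the generator $t_{i,j}$ is a conjugate of $t_{1,j-i+1}$ by the braid $\sigma_{i-1}\sigma_{i-2}\cdots\sigma_{1}$ (pushing strand $i$ to the first position); therefore every commutator $[t_{i,j},t_{i,j}^{h}]$ of Proposition \ref{Corolario2.1} is, inside $B_n(M)$, a conjugate of a commutator $[t_{1,j'},t_{1,j'}^{h'}]$. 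Consequently the $B_n(M)$-normal closure of the family $\{[t_{1,j},t_{1,j}^{h}]\}_{j,h}$ already contains every $PB_n(M)$-normal generator of $H_n(M)$, so only the relations $(LH)$ with $i=1$ are needed.

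For step (iii), I would check that the formulas declared in the statement, namely $t_{1,j}=\sigma_1\cdots\sigma_{j-2}\sigma_{j-1}^{2}\sigma_{j-2}^{-1}\cdots\sigma_1^{-1}$ and $A_{2,s}=\sigma_1^{-1}(a_{1,1}\cdots a_{1,s-1}a_{1,s+1}^{-1}\cdots a_{1,2g}^{-1})\sigma_1^{-1}$, agree with the definitions of $T_{i,j}$ and $A_{j,s}$ used in Theorems \ref{Teorema1.1.3} and \ref{Teorema2.1.5} after expressing the pure braid generators $a_{2,r}$ in terms of $a_{1,r}$ and $\sigma_1$; these are the standard conjugation formulas already implicit in the proof of Theorem \ref{Teorema1.1.2}. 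Once this translation is in place, the relations $(R1)$--$(R6)$ are inherited from Theorem \ref{Teorema1.1.2} and $(LH)$ encodes precisely the quotient by $H_n(M)$, so Tietze manipulations yield the desired presentation.

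I expect the main obstacle to be step (ii): showing cleanly that the full family of commutators $[t_{i,j},t_{i,j}^{h}]$, indexed over all $1\le i<j\le n$ and $h\in\mathbb{F}(2g+n-i)$, lies in the $B_n(M)$-normal closure of the $i=1$ subfamily. This requires tracking how conjugation by $\sigma_{i-1}\cdots\sigma_1$ transforms a free word $h\in\mathbb{F}(2g+n-i)$ into a word in $\mathbb{F}(2g+n-1)$, and verifying that the commutator structure is preserved under this correspondence; the rest of the argument is a relatively routine combination of von Dyck's theorem with the presentations and normality results already established in the paper.
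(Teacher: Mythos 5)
First, a point of framing: this paper does not actually prove Theorem \ref{Teorema4.4.1} --- it is imported verbatim from \cite[Theorem 3.9]{Lima2} as background for Section \ref{Section3}, so there is no internal proof to compare your proposal against. Judged on its own terms, your strategy (von Dyck's theorem applied to $\widehat{B}_{n}(M)=B_{n}(M)/H_{n}(M)$ from Proposition \ref{teo3.7}, normality of $H_{n}(M)$ in $B_{n}(M)$, and reduction of the normal generating set of Proposition \ref{Corolario2.1} to the $i=1$ subfamily) is the natural one and is consistent with the machinery the paper assembles; step (i) in particular is fine, since link-homotopy is a congruence for concatenation, so $\beta\gamma\beta^{-1}\sim\beta 1\beta^{-1}=1$ whenever $\gamma\sim 1$.

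The genuine gap is in step (ii), and it is not merely the bookkeeping you anticipate. Two concrete problems. First, the conjugator is wrong: $\sigma_{i-1}\cdots\sigma_{1}$ induces a cycle on the strands $\{1,\ldots,i\}$ and fixes $i+1,\ldots,n$, so it carries the pair $\{1,m\}$ to $\{i,m\}$ only when $m>i$; to reach $t_{i,j}$ you must start from $t_{1,j}$, not $t_{1,j-i+1}$, and even then the conjugate agrees with $t_{i,j}$ only up to further conjugation by a pure braid. That is harmless for a normal-closure argument but signals that the geometric picture needs care. Second, and more seriously, writing $t_{i,j}=t_{1,j}^{c}$ gives $[t_{i,j},t_{i,j}^{h}]=\bigl([t_{1,j},t_{1,j}^{chc^{-1}}]\bigr)^{c}$, and the conjugated exponent $chc^{-1}$ is an arbitrary element of $PB_{n}(M)$ --- there is no reason for it to lie in the free subgroup $\mathbb{F}(2g+n-1)$ over which the relation (LH) is imposed. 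So the family $\{[t_{1,j},t_{1,j}^{h}]:h\in\mathbb{F}(2g+n-1)\}$ does not visibly normally generate all of $H_{n}(M)$ in $B_{n}(M)$. Closing this gap requires the additional (nontrivial, but standard in Goldsmith's and Yurasovskaya's treatments) fact that, modulo the full relation set, $t_{1,j}$ already commutes with $t_{1,j}^{g}$ for \emph{every} $g$ in the group, not just for $g$ in the distinguished free subgroup; this is usually extracted from the combing/normal form underlying Theorem \ref{Teorema2.1.5} rather than from a single conjugation. Without that lemma, your step (ii) does not go through as written.
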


\begin{figure}[h]
\center
\includegraphics[scale=0.45]{lajm10}
\caption{Generators of $\widehat{B}_{n}(M)$.}
\end{figure}

\begin{corollary}\label{Prop4.2.7}\cite[p.18]{Lima2}
Under the conditions above, we have the well defined short exact sequence:
$$\xymatrix{
                      1 \ar[r]   &   {H}_{n}(M) \ar[r]^{i}   &   {B}_{n}(M) \ar[r]^{\hspace{0.2cm}\psi}   &   \widehat{B}_{n}(M) \ar[r]   &   1
},$$ where $i$ is the inclusion homomorphism.
\end{corollary}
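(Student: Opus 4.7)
The strategy is to reduce the entire claim to a single step: verifying that $H_{n}(M)$ is normal in the full braid group $B_{n}(M)$, not merely in $PB_{n}(M)$. Once this is in hand, Proposition \ref{teo3.7} identifies $B_{n}(M)/H_{n}(M)$ with $\widehat{B}_{n}(M)$, so I would define $\psi$ to be the canonical projection. Injectivity of $i$ is obvious from its being an inclusion, surjectivity of $\psi$ is automatic for a quotient map, and exactness at $B_{n}(M)$, namely $\ker \psi = H_{n}(M) = \mathrm{image}(i)$, holds by construction of the quotient.

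For the normality step, I would combine two ingredients. First, $PB_{n}(M)$ is normal in $B_{n}(M)$ as the kernel of the permutation homomorphism $B_{n}(M) \to S_{n}$; hence for any $\beta \in B_{n}(M)$ and any $\alpha \in H_{n}(M) \subseteq PB_{n}(M)$, the conjugate $\beta \alpha \beta^{-1}$ is again pure. Second, link-homotopy is preserved under concatenation: a link-homotopy deforming $\alpha$ to the trivial braid can be carried out inside the middle segment of $\beta \cdot \alpha \cdot \beta^{-1}$ while leaving the outer segments $\beta$ and $\beta^{-1}$ stationary. Because $\alpha$ is pure, the endpoints of each of its strands remain fixed throughout the deformation, so the concatenation stays a valid braid at every stage and the self-crossing changes taking place within individual strands of $\alpha$ do not disturb disjointness of the other strands. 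The resulting link-homotopy connects $\beta \alpha \beta^{-1}$ to $\beta \beta^{-1} = 1$ in $B_{n}(M)$, so $\beta \alpha \beta^{-1} \in H_{n}(M)$.

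The principal obstacle is precisely this normality upgrade, since Proposition \ref{Corolario2.1} furnishes normality only inside $PB_{n}(M)$, and a purely algebraic argument going through the normal generators $[t_{i,j}, t_{i,j}^{h}]$ would require separately analyzing their conjugates by each $\sigma_{k}$-generator. Invoking the topological description of link-homotopy, as above, sidesteps this case analysis and seems the cleanest route. Once normality is secured, the exactness assertions of the sequence are formal and the corollary follows.
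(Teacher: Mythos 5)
Your proposal is correct. Note, however, that the paper itself supplies no proof of this corollary: it is quoted from \cite[p.18]{Lima2}, and within the present paper it is positioned as an immediate consequence of Proposition \ref{teo3.7}, which already asserts the identification $\widehat{B}_{n}(M)=B_{n}(M)/H_{n}(M)$ (and hence implicitly presupposes that $H_{n}(M)$ is normal in the full group $B_{n}(M)$, not only in $PB_{n}(M)$ as Proposition \ref{Corolario2.1} records). Your argument takes exactly this route --- quotient map plus formal exactness --- and its only substantive content is the normality upgrade, which you justify correctly: for $\beta\in B_{n}(M)$ and $\alpha\in H_{n}(M)$ the conjugate $\beta\alpha\beta^{-1}$ is pure because $PB_{n}(M)$ is the kernel of the permutation map, and performing the link-homotopy $\alpha\sim 1$ inside the middle block of the concatenation (where every self-crossing change of a strand of $\alpha$ is still a self-crossing of a single strand of the concatenated braid) yields $\beta\alpha\beta^{-1}\sim\beta\beta^{-1}$, which is isotopic and hence link-homotopic to the trivial braid. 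The one cosmetic imprecision is writing ``$\beta\beta^{-1}=1$'' where one means isotopic to the trivial braid, but since isotopy is subsumed by link-homotopy this does not affect the conclusion. In short, your proof fills in precisely the detail that the paper leaves to the cited reference, and is consistent with the intended argument.
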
 

\vspace{0.2cm}

\begin{proposition}\label{Prop4.2.7}\cite[Proposition 3.8]{Lima2}
$\widehat{PB}_{n}(M)$ is a normal subgroup of $\widehat{B}_{n}(M)$. Moreover, under the homomorphism $\psi$ defined previously, we have the well defined short exact sequence:
$$\xymatrix{
                      1 \ar[r]   &   \widehat{PB}_{n}(M) \ar[r]^{i}   &   \widehat{B}_{n}(M) \ar[r]^{\hspace{0.2cm}\psi}   &   \Sigma_{n} \ar[r]   &   1
},$$ where $i$ is the inclusion homomorphism.
\end{proposition}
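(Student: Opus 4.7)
The plan is to obtain the sequence by descending the classical permutation homomorphism to the link-homotopy quotient. The starting point is the standard short exact sequence
$$1 \longrightarrow PB_n(M) \longrightarrow B_n(M) \stackrel{\pi}{\longrightarrow} \Sigma_n \longrightarrow 1,$$
in which $\pi$ sends a geometric braid to the permutation of $\{P_1,\dots,P_n\}$ induced on its endpoints. Since the preceding Corollary identifies $\widehat{B}_n(M)$ with $B_n(M)/H_n(M)$, the first step is to check that $\pi$ factors through this quotient. Because $H_n(M)$ consists, by definition, of link-homotopically trivial \emph{pure} braids, one has $H_n(M)\subseteq PB_n(M)=\ker\pi$, and the universal property of quotients then produces a well-defined surjection $\psi\colon \widehat{B}_n(M)\to\Sigma_n$ with $\psi\circ q = \pi$, where $q\colon B_n(M)\to\widehat{B}_n(M)$ is the canonical projection.

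Next I would compute the kernel of $\psi$ via the third isomorphism theorem:
$$\ker\psi \;=\; \pi^{-1}(\mathrm{id})/H_n(M) \;=\; PB_n(M)/H_n(M),$$
which is precisely $\widehat{PB}_n(M)$ by Proposition \ref{prop2.1}. This simultaneously realises $\widehat{PB}_n(M)$ as a subgroup of $\widehat{B}_n(M)$ through the map induced by the inclusion $PB_n(M)\hookrightarrow B_n(M)$, and, since kernels are normal, establishes $\widehat{PB}_n(M)\lhd \widehat{B}_n(M)$. Exactness of the proposed sequence is then formal: injectivity of $i$ at the left follows from $H_n(M)\subseteq PB_n(M)$, exactness at the middle is the kernel computation just performed, and surjectivity of $\psi$ follows from that of $\pi$.

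The only delicate point is the well-definedness of $\psi$ on link-homotopy classes, namely that link-homotopy preserves the induced endpoint permutation. This reduces to the inclusion $H_n(M)\subseteq PB_n(M)$ used above, which is a direct consequence of Definition \ref{linkhomotopia2}: each link-homotopy move is either an ambient isotopy fixing $M\times\{0,1\}$, which plainly preserves endpoints, or a local self-crossing change on a single strand, which again does not permute the endpoints. Hence the main obstacle is conceptual rather than computational, and no further work with the presentations of Theorems \ref{Teorema1.1.3}, \ref{Teorema1.1.2} or \ref{Teorema2.1.5} is required; the substantive content has been packaged into the previous Corollary and Proposition \ref{prop2.1}.
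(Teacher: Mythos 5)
Your argument is correct: since $H_{n}(M)$ consists of \emph{pure} link-homotopically trivial braids, it lies in the kernel of the permutation map $\pi\colon B_{n}(M)\to\Sigma_{n}$, so $\pi$ descends to $\psi$ on $\widehat{B}_{n}(M)=B_{n}(M)/H_{n}(M)$, and the third isomorphism theorem identifies $\ker\psi$ with $PB_{n}(M)/H_{n}(M)=\widehat{PB}_{n}(M)$, giving normality and exactness at once. Note that the paper itself states this proposition only by citation to \cite[Proposition 3.8]{Lima2} and supplies no proof, so there is nothing in the text to compare against; your derivation is the natural one, relying only on Propositions \ref{prop2.1} and \ref{teo3.7} (the latter implicitly guaranteeing that $H_{n}(M)$ is normal in all of $B_{n}(M)$, which your appeal to the universal property quietly requires), and is sound.
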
 

From now on, let $M$ be a closed, connected and orientable surface of genus $g\geq 1$. Given a string link $\hat{\sigma}= [(\sigma_{1},\ldots,\sigma_{n})]$ over $M$ in $\widehat{PB}_{n}(M)$, we can consider, for all $i=1,\ldots,n$ the loop $\mu_{i}$ in $M$ constructed as follows: take the $i$-th string $\sigma_{i}$ (which is a path in $M \times [0,1]$) and call as $\mu_{i}$ its projection over the first coordinate (i.e., over $M$). Since $\hat{\sigma} \in \widehat{PB}_{n}(M)$, $\mu_{i}$ is a loop in $M$ based at $P_{i}$, for all $i=1,\ldots,n$ which represents an element of $\pi_{1}(M, P_{i})\simeq \pi_{1}(M)$. This defines an epimorphism $\hat{\theta}_{n}: \widehat{PB}_{n}(M)\rightarrow \pi_{1}(M)^{n}$ which sends $\hat{\sigma}= [(\sigma_{1},\ldots,\sigma_{n})]$ to $\mu= (\mu_{1},\ldots, \mu_{n})$.

\begin{lemma}\cite[Lemma 3.1]{Lima}\label{lema3.1.1}
$\hat{\theta}_{n}$ is a well defined surjective homomorphism.
\end{lemma}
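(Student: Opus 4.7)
The strategy is to build $\hat\theta_n$ in two stages: first produce an auxiliary map $\theta_n\colon PB_n(M)\to\pi_1(M)^n$ at the level of pure braids, then verify that it descends to the link-homotopy quotient $\widehat{PB}_n(M)=PB_n(M)/H_n(M)$. Given a pure braid $[(\sigma_1,\ldots,\sigma_n)]$, set $\theta_n([(\sigma_1,\ldots,\sigma_n)])=([\mu_1],\ldots,[\mu_n])$, where $\mu_i$ is the projection of $\sigma_i$ to $M$. Independence of representative is immediate: an ambient isotopy of pure braids relative to $M\times\{0,1\}$ restricts on each strand to a homotopy $\sigma_i\simeq\sigma_i'$ rel endpoints, and projecting to $M$ yields a based homotopy $\mu_i\simeq\mu_i'$ in $\pi_1(M,P_i)$. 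The homomorphism property is equally direct: for pure braids $\alpha$ and $\beta$, the $i$-th strand of $\alpha\beta$ is the concatenation of the $i$-th strand of $\alpha$ with the $i$-th strand of $\beta$, so projecting gives $[\mu_i^\alpha\cdot\mu_i^\beta]=[\mu_i^\alpha]\,[\mu_i^\beta]$ in $\pi_1(M,P_i)$.

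The delicate step, and the heart of the argument, is showing $\theta_n$ factors through $\widehat{PB}_n(M)$, i.e.\ that $H_n(M)\subseteq\ker\theta_n$. Invoking Definition \ref{linkhomotopia2}, any link-homotopy between pure braids is a finite composition of ambient isotopies (already handled above) together with crossing changes on a single strand. A self-crossing of strand $i$ corresponds geometrically to a transverse self-intersection of $\mu_i$ in $M$, whose two preimages in $\sigma_i$ occur at distinct heights in $M\times I$; a crossing change merely swaps those heights, leaving the projected loop $\mu_i$ \emph{literally equal} as a map to $M$, and leaves every other $\mu_j$ unchanged. Consequently each based class $[\mu_k]\in\pi_1(M,P_k)$ is preserved by every link-homotopy move, so $H_n(M)\subseteq\ker\theta_n$ and the induced map $\hat\theta_n$ is well defined. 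I expect this to be the main obstacle: since $\pi_1(M)$ is non-abelian in general, one cannot appeal to any abstract commutator identity in $\pi_1(M)^n$, and the argument must genuinely rest on the geometric observation that a same-strand crossing change alters only heights, not the projection.

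For surjectivity, given a tuple $([\mu_1],\ldots,[\mu_n])\in\pi_1(M)^n$ I would lift it to an honest pure braid and then push down. Choose smooth loops $\mu_i\colon[0,1]\to M$ based at $P_i$ representing the classes, and put them simultaneously in general position so that all self- and pairwise intersections are finitely many transverse double points. Define strands $\sigma_i(t)=(\mu_i(t),h_i(t))\in M\times[0,1]$ with smooth $h_i\colon[0,1]\to[0,1]$ satisfying $h_i(0)=0$ and $h_i(1)=1$; by arbitrarily small perturbations of the $h_i$, I can arrange that at each of the finitely many parameter pairs $(s,t)$ with $\mu_i(s)=\mu_j(t)$ and $i\neq j$ one has $h_i(s)\neq h_j(t)$, ensuring pairwise disjoint images. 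The resulting tuple $(\sigma_1,\ldots,\sigma_n)$ is then a pure braid in $PB_n(M)$ whose $\theta_n$-image is $([\mu_1],\ldots,[\mu_n])$, and its image in $\widehat{PB}_n(M)$ under the quotient projection witnesses surjectivity of $\hat\theta_n$.
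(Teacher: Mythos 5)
The paper does not actually prove this lemma itself---it only cites it as \cite[Lemma 3.1]{Lima}---so there is no internal proof to compare against; your argument is correct and is the expected one: ambient isotopies and same-strand crossing changes project to based homotopies (indeed, for crossing changes, equalities) of the loops $\mu_{i}$, so the strand-projection map descends to the link-homotopy quotient and is a homomorphism, while a general-position lift gives surjectivity. The one imprecision is at the end: the tuple $(\sigma_{1},\ldots,\sigma_{n})$ you build with non-monotone height functions $h_{i}$ is a string link rather than a braid in $PB_{n}(\mathbb{D})$'s analogue $PB_{n}(M)$, so it is not literally hit by ``the quotient projection''; this is harmless, since it still represents an element of $\widehat{PB}_{n}(M)$, or one can instead take $h_{i}(t)=t$ and perturb the $\mu_{i}$ so that $\mu_{1}(t),\ldots,\mu_{n}(t)$ are pairwise distinct for every $t$ (generic, since the diagonal has codimension two in $M\times M$), which produces an honest pure braid with the required image.
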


\section{An Exact Sequence for Link-Homotopy Braid Groups \label{Section3}}

In this section we extend the result provided by Charles H. Goldberg in \cite[Theorem 1]{Goldberg} for generalizing string links over surfaces. To prove this result, we will use as a tool the surjective homomorphism $\hat{\theta}_{n}$ of Lemma \ref{lema3.1.1}.

\subsection{Main Theorem}

Let us consider a closed, connected and orientable surface $M$, of genus $g\geq1$, i.e., $M$ is different of the sphere. Let $\mathcal{P}= \{P_{1}, P_{2},\ldots, P_{n}\}$ be a set of $n$ distinct fixed points chosen arbitrarily in the interior of $M$.Now, let us define the following map: $$\hat{f_{n}}: \widehat{PB}_{n}(\mathbb{D}) \rightarrow \widehat{PB}_{n}(M)$$ given by $\hat{f_{n}}(\hat{\beta})= \hat{\beta}$, for each $\hat{\beta}$ in $\widehat{PB}_{n}(\mathbb{D})$, where $\mathbb{D}$ denotes the unit disk. Also, we have $\hat{\theta}_{n}: \widehat{PB}_{n}(M) \rightarrow \pi_{1}(M)^{n}$ given by $\hat{\theta}_{n}(\hat{\alpha})= \hat{\theta}_{n}([(\alpha_{1},\ldots,\alpha_{n})])= (\mu_{1},\ldots,\mu_{n})$, where each $\mu_{i}$ is the strand $\alpha_{i}$ of $\hat{\alpha}$ viewed as a loop in the fundamental group of $M$, $i=1,\ldots,n$, for all $\hat{\alpha}=[(\alpha_{1},\ldots,\alpha_{n})]$ in $\widehat{PB}_{n}(M)$. By Lemma \ref{lema3.1.1}, $\hat{\theta}_{n}$ is a well defined surjective homomorphism.

\begin{theorem}\label{Teorema3.3.1}
If $M$ is a closed, connected and orientable surface, of genus $g\geq 1$, then in the following sequence of groups (not necessarily abelian):
$$\xymatrix{
                      1 \ar[r]   &   \widehat{PB}_{n}(\mathbb{D}) \ar[r]^{\hspace{-0.3cm}\hat{f_{n}}}   &   \widehat{PB}_{n}(M) \ar[r]^{\hspace{0.2cm}\hat{\theta}_{n}}   &   \pi_{1}(M)^{n} \ar[r]   &   1
},$$ the kernel of each homomorphism is equal to the normal closure of the image of the previous homomorphism in the sequence, i.e., $\ker(\hat{\theta}_{n})= \langle Im\hat{f_{n}}\rangle^{N}$.
\end{theorem}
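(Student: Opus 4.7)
The plan is to reduce the statement to the analogous, already-known result for the pure braid groups (Goldberg's theorem) and then descend through the canonical projections that collapse $H_{n}$. Let $q\colon PB_{n}(M)\twoheadrightarrow\widehat{PB}_{n}(M)$ and $q'\colon PB_{n}(\mathbb{D})\twoheadrightarrow\widehat{PB}_{n}(\mathbb{D})$ be the quotients by $H_{n}(M)$ and $H_{n}(\mathbb{D})$ (Propositions \ref{prop2.1} and \ref{teo3.7}), and write $f_{n}$, $\theta_{n}$ for the pure-braid analogues of $\hat{f}_{n}$, $\hat{\theta}_{n}$. Because $H_{n}(\mathbb{D})\subseteq H_{n}(M)$ tautologically, the square $q\circ f_{n}=\hat{f}_{n}\circ q'$ commutes, and by Lemma~\ref{lema3.1.1} the well-definedness of $\hat{\theta}_{n}$ is precisely the statement that $H_{n}(M)\subseteq\ker\theta_{n}$, so $\theta_{n}=\hat{\theta}_{n}\circ q$.

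For the inclusion $\langle\mathrm{Im}\,\hat{f}_{n}\rangle^{N}\subseteq\ker\hat{\theta}_{n}$, I would note that any class $\hat\beta\in\widehat{PB}_{n}(\mathbb{D})$ is represented by strands lying in the slab $\mathbb{D}\times I\subset M\times I$, so each projected loop $\mu_{i}$ sits inside the contractible disk $\mathbb{D}\subset M$ and is null-homotopic in $M$. Hence $\hat{\theta}_{n}\circ\hat{f}_{n}\equiv 1$, and since $\ker\hat{\theta}_{n}$ is a normal subgroup, it absorbs the normal closure of $\mathrm{Im}\,\hat{f}_{n}$.

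For the reverse inclusion, apply Goldberg's theorem \cite[Theorem~1]{Goldberg} to
$$1\ra PB_{n}(\mathbb{D})\stackrel{f_{n}}{\lra}PB_{n}(M)\stackrel{\theta_{n}}{\lra}\pi_{1}(M)^{n}\ra 1,$$
which gives $\ker\theta_{n}=\langle\mathrm{Im}\,f_{n}\rangle^{N}$. Pulling back through $q$ yields $q^{-1}(\ker\hat{\theta}_{n})=\ker\theta_{n}=\langle\mathrm{Im}\,f_{n}\rangle^{N}$; pushing forward through the surjection $q$ and using that surjective homomorphisms carry normal closures to normal closures, one obtains
$$\ker\hat{\theta}_{n}=q\bigl(\langle\mathrm{Im}\,f_{n}\rangle^{N}\bigr)=\langle q(\mathrm{Im}\,f_{n})\rangle^{N}=\langle\mathrm{Im}\,\hat{f}_{n}\rangle^{N},$$
where the last equality is the identity $q(\mathrm{Im}\,f_{n})=\mathrm{Im}(\hat{f}_{n}\circ q')=\mathrm{Im}\,\hat{f}_{n}$, valid because $q'$ is surjective.

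The main obstacle is securing Goldberg's result in the precise form required for closed, connected, orientable surfaces of genus $g\ge 1$; once that is in place the rest of the argument is essentially formal. A self-contained alternative would proceed directly from Theorem~\ref{Teorema2.1.5}: the subgroup $\mathrm{Im}\,\hat{f}_{n}$ is generated by the $t_{j,k}$'s, so passing to the quotient $\widehat{PB}_{n}(M)/\langle\mathrm{Im}\,\hat{f}_{n}\rangle^{N}$ kills every $t_{j,k}$ (hence every $T_{i,j}$), collapsing (LH1) and (PR4)--(PR6), turning (PR1) and (PR8) into the surface relation $a_{i,1}\cdots a_{i,2g}a_{i,1}^{-1}\cdots a_{i,2g}^{-1}=1$ in each of the $n$ factors, and reducing (PR2), (PR3), (PR7) to a system from which the pairwise commutation $[a_{i,r},a_{j,s}]=1$ ($i\ne j$) must be extracted; the latter extraction, obtained by taking successive quotients of consecutive words $A_{j,s}$ and combining them with the surface relation, is the technical heart of this alternative route and yields the desired isomorphism of the quotient with $\pi_{1}(M)^{n}$.
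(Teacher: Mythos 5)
Your argument for the displayed claim $\ker(\hat{\theta}_{n})=\langle \mathrm{Im}\,\hat{f}_{n}\rangle^{N}$ is correct and follows essentially the same route as the paper: both set up the commutative ladder between Goldberg's pure-braid sequence and the string-link sequence via the quotient maps, deduce $\mathrm{Im}\,\hat{f}_{n}\subseteq\ker\hat{\theta}_{n}$ from the corresponding inclusion upstairs (your direct observation that the projected loops lie in the contractible disk is a harmless shortcut for the paper's diagram chase in Lemma \ref{lema3.3.3}), and obtain the reverse inclusion by lifting an element of $\ker\hat{\theta}_{n}$ to $\ker\theta_{n}$, applying Goldberg's theorem, and pushing the resulting product of conjugates back down; your packaging of that last step as ``surjective homomorphisms carry normal closures to normal closures'' is simply a cleaner phrasing of the explicit computation in Lemma \ref{lema3.3.4}, and it has the side benefit of avoiding the paper's appeal to an unresolved reference there.

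The one piece of the theorem you do not address is exactness at the left-hand term: since the sequence begins with $1\rightarrow\widehat{PB}_{n}(\mathbb{D})$, you must also show that $\hat{f}_{n}$ is injective, i.e.\ that a disk braid which becomes link-homotopically trivial in $M\times I$ was already link-homotopically trivial in $\mathbb{D}\times I$, equivalently $f_{n}^{-1}(H_{n}(M))=H_{n}(\mathbb{D})$. This is not a formal consequence of the quotient setup, and the paper isolates it as Lemma \ref{lema3.3.2}, deriving it from Goldsmith's explicit description of $H_{n}(\mathbb{D})$; you should add this step. Finally, the self-contained presentation-theoretic alternative you sketch at the end is a reasonable idea but, as you yourself note, the extraction of the commutations $[a_{i,r},a_{j,s}]=1$ for $i\neq j$ from (PR2), (PR3), (PR7) is left undone, so that route cannot yet be counted as a proof.
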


\subsection{The well definition of $\hat{f}_{n}$ and proof of Theorem \ref{Teorema3.3.1}}

\begin{lemma}\label{lema3.3.2}
The map $\hat{f}{_{n}}$ is a well defined injective homomorphism.
\end{lemma}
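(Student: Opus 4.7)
The plan is to separate well-definedness from injectivity; the homomorphism property is immediate, since $\hat{f}_n$ is induced on representatives by set-theoretic inclusion and concatenation is preserved.

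For well-definedness, I will fix an embedding $\iota : \mathbb{D} \hookrightarrow M$ of the unit disk onto a closed disk neighborhood of the basepoints $\mathcal{P}$. Then $\iota \times \mathrm{id}_I$ carries geometric braids on $\mathbb{D}$ to geometric braids on $M$ and link-homotopies to link-homotopies, so the induced inclusion $f_n : PB_n(\mathbb{D}) \to PB_n(M)$ satisfies $f_n(H_n(\mathbb{D})) \subseteq H_n(M)$. Algebraically this is also immediate from Proposition \ref{Corolario2.1}: every Milnor generator $[t_{i,j}, t_{i,j}^{h}]$ of $H_n(\mathbb{D})$ with $h \in \mathbb{F}(n-i)$ is already a generator of $H_n(M)$, since $\mathbb{F}(n-i) \subseteq \mathbb{F}(2g+n-i)$. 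Consequently $\hat{f}_n$ descends to a well-defined homomorphism of the link-homotopy quotients.

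For injectivity, I will first invoke the classical injectivity of $f_n : PB_n(\mathbb{D}) \to PB_n(M)$ for $g \ge 1$ (Paris--Rolfsen; Goldberg \cite{Goldberg}) in order to identify $PB_n(\mathbb{D})$ with its image in $PB_n(M)$. Injectivity of $\hat{f}_n$ is then equivalent to the identity
$$ f_n^{-1}(H_n(M)) \;=\; H_n(\mathbb{D}), $$
whose nontrivial ``$\subseteq$'' direction is the heart of the proof. My plan is to take $\beta \in PB_n(\mathbb{D})$ with $f_n(\beta) \in H_n(M)$, write $f_n(\beta) = \prod_i g_i [t_{p_i, q_i}, t_{p_i, q_i}^{h_i}] g_i^{-1}$ with $g_i \in PB_n(M)$ and $h_i \in \mathbb{F}(2g + n - p_i)$ via Proposition \ref{Corolario2.1}, and exploit the iterated Fadell--Neuwirth semidirect-product structure of $PB_n(M)$ visible in Theorem \ref{Teorema1.1.3}. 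Using relations (PR2)--(PR7) I will push every $a_{k,r}$-letter in the $g_i$'s and $h_i$'s past the $t$-letters; because $f_n(\beta)$ lies in the subgroup generated by the $t$'s alone, the $a_{k,r}$'s must cancel in pairs, and what remains represents an element of $H_n(\mathbb{D})$.

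The hard part will be that an individual conjugated commutator $g_i [t_{p_i,q_i}, t_{p_i,q_i}^{h_i}] g_i^{-1}$ need not itself lie in $f_n(PB_n(\mathbb{D}))$ even though the product does, so the $a_{k,r}$-cancellations must be tracked across all terms simultaneously. The key tool will be the link-homotopy relations (LH1), which allow me, modulo $H_n(\mathbb{D})$, to replace each $h_i$ by its ``disk projection'' in $\mathbb{F}(n-p_i)$ without altering the outer commutator $[t_{p_i,q_i}, t_{p_i,q_i}^{h_i}]$: this is precisely the mechanism by which the $a$-letters become inert inside Milnor generators, reducing the problem to a statement entirely within the disk pure braid group.
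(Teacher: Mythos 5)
Your treatment of well-definedness and of the homomorphism property coincides with the paper's: the inclusion of the disk into $M$ carries link-homotopies to link-homotopies, hence $f_{n}(H_{n}(\mathbb{D}))\subseteq H_{n}(M)$ and $\hat{f}_{n}$ descends to the quotients; your supplementary algebraic check via the Milnor generators of Proposition \ref{Corolario2.1} is consistent with this. Your reduction of injectivity to the identity $f_{n}^{-1}(H_{n}(M))=H_{n}(\mathbb{D})$ is also exactly the reduction the paper makes when it passes from ``$\beta\in H_{n}(M)$'' to ``$\beta\in H_{n}(\mathbb{D})$''.

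However, the containment $f_{n}^{-1}(H_{n}(M))\subseteq H_{n}(\mathbb{D})$ --- which you correctly identify as the heart of the matter --- is not actually established by your argument; what you give is a plan whose central mechanism is unjustified. The assertion that ``because $f_{n}(\beta)$ lies in the subgroup generated by the $t$'s alone, the $a_{k,r}$'s must cancel in pairs'' presupposes something like a retraction of $PB_{n}(M)$ (or $\widehat{PB}_{n}(M)$) onto the subgroup generated by the $T_{i,j}$'s that kills the $a_{k,r}$'s, and no such retraction exists: relations (PR1) and (PR8) of Theorem \ref{Teorema1.1.3} equate words in the $a$'s with words in the $T$'s, so setting all $a_{k,r}=1$ forces relations among the $T$'s (for instance $\prod_{i=1}^{n-1}T_{i,n-1}^{-1}T_{i,n}=1$) that are false in $PB_{n}(\mathbb{D})$, whose abelianization is free of rank $n(n-1)/2$. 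You yourself flag the further difficulty that the cancellations must be tracked across all the conjugated commutators simultaneously, but you do not resolve it, and the proposed use of (LH1) to replace each $h_{i}$ by a ``disk projection'' is likewise only asserted. For comparison, the paper disposes of this step by citation (it invokes Goldsmith to conclude $\beta\in H_{n}(\mathbb{D})$ from $\beta\in H_{n}(M)$) rather than by the combinatorial argument you sketch; so as written your proposal replaces the paper's one-line appeal with an incomplete argument whose key step would require substantial additional input (e.g.\ a genuine normal form or Fadell--Neuwirth-type splitting for $\widehat{PB}_{n}(M)$ relative to the image of $\widehat{PB}_{n}(\mathbb{D})$) before it could be accepted.
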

\begin{proof} Recall the inclusion homomorphism $f_{n}: PB_{n}(\mathbb{D}) \rightarrow PB_{n}(M)$ defined by Birman in \cite{Birman2}.
Now, let $\beta$ be an element link-homotopically trivial in $PB_{n}(\mathbb{D})$. Clearly, $f_{n}(\beta)= \beta$ is a link-homotopically trivial element of $PB_{n}(M)$, i.e., $f_{n}(H_{n}(\mathbb{D})) \subseteq H_{n}(M)$. Thus, $\hat{f}_{n}$ is the well defined homomorphism induced by the injection $f_{n}$. Now, we need to prove that $\hat{f}_{n}$ is injective. Indeed, let $\hat{\beta} \in \widehat{PB}_{n}(\mathbb{D})$ such that $\hat{f}_{n}(\hat{\beta})= 1$, where $1$ denotes the identity in $\widehat{PB}_{n}(M)$, i.e., $1= H_{n}(M)$. So we have $\hat{\beta} \in \widehat{PB}_{n}(\mathbb{D})$ and $\hat{\beta} \in H_{n}(M)$. By Goldsmith in \cite{Goldsmith}, we have $\hat{\beta} \in H_{n}(\mathbb{D})$. Therefore, $\ker{\hat{f}_{n}}= H_{n}(\mathbb{D})$, i.e., $\hat{f}_{n}$ is injective.
\end{proof}

Let us consider the following diagram:
\begin{eqnarray}\label{diaiaiaiaiaiaaiia}\xymatrix{
    1 \ar[r] &  PB_{n}(\mathbb{D}) \ar[d]^{p_{1}} \ar[r]^{f_{n}}               & PB_{n}(M)       \ar[d]^{p_{2}} \ar[r]^{\theta_{n}} & \pi_{1}(M)^{n}   \ar[d]^{id}  \ar[r]    &  1\\
    1 \ar[r] & \widehat{PB}_{n}(\mathbb{D})           \ar[r]^{\hat{f}_{n}}         & \widehat{PB}_{n}(M) \ar[r]^{\hat{\theta}_{n}}          & \pi_{1}(M)^{n}           \ar[r]    &  1 }
\end{eqnarray} where $p_{1},p_{2}$ are the respective projections and $id$ is the identity in $\pi_{1}(M)$.

We claim that this diagram is commutative. Indeed, $\forall \beta \in PB_{n}(\mathbb{D})$,
$$\hat{f}_{n}\circ p_{1}(\beta)= \hat{f}_{n}([\beta])= [\beta]\;\; \mbox{\rm and} \;\; p_{2}\circ f_{n}(\beta)= p_{2}(\beta)= [\beta],$$
that is, $\hat{f}_{n}\circ p_{1}= p_{2}\circ f_{n}$. Moreover, $\forall \alpha \in PB_{n}(M)$, 
$$\hat{\theta}_{n}\circ p_{2}(\alpha)= \hat{\theta}_{n}([\alpha])= (\mu_{1},\ldots,\mu_{n})\;\; \mbox{\rm and} \;\;id\circ \theta_{n}(\alpha)= id(\mu_{1},\ldots,\mu_{n})= (\mu_{1},\ldots,\mu_{n}),$$ i.e., $\hat{\theta}_{n}\circ p_{2}=id\circ \theta_{n}$.

\begin{lemma}\label{lema3.3.3}
${\rm{Im}}(\hat{f}_{n})\subseteq \ker(\hat{\theta}_{n})$.
\end{lemma}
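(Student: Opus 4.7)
The plan is to do a straightforward diagram chase in \eqref{diaiaiaiaiaiaaiia} and then invoke the fact that loops which live inside the embedded disk $\mathbb{D}\subseteq M$ are null-homotopic in $M$. Let $\hat{\beta}\in \mathrm{Im}(\hat{f}_{n})$, so that $\hat{\beta}= \hat{f}_{n}(\hat{\alpha})$ for some $\hat{\alpha}\in \widehat{PB}_{n}(\mathbb{D})$. Since $p_{1}\colon PB_{n}(\mathbb{D})\to \widehat{PB}_{n}(\mathbb{D})$ is surjective, pick a representative $\alpha\in PB_{n}(\mathbb{D})$ with $p_{1}(\alpha)= \hat{\alpha}$.

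Using commutativity of the left-hand square of \eqref{diaiaiaiaiaiaaiia}, $\hat{\beta}= \hat{f}_{n}(p_{1}(\alpha))= p_{2}(f_{n}(\alpha))$, and then using commutativity of the right-hand square,
\[
\hat{\theta}_{n}(\hat{\beta})= \hat{\theta}_{n}(p_{2}(f_{n}(\alpha)))= \mathrm{id}\circ \theta_{n}(f_{n}(\alpha))= \theta_{n}(f_{n}(\alpha)).
\]
Thus it suffices to show that $\theta_{n}(f_{n}(\alpha))= (1,\ldots,1)$ in $\pi_{1}(M)^{n}$.

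This is the geometric heart of the argument, but it is essentially immediate from the definitions. The braid $f_{n}(\alpha)$ is simply $\alpha$ regarded as an element of $PB_{n}(M)$ via the inclusion of the fixed disk $\mathbb{D}\subseteq M$ (containing the base points $P_{1},\ldots,P_{n}$). Consequently each strand of $f_{n}(\alpha)$ is a path entirely contained in $\mathbb{D}\times [0,1]$, and its projection $\mu_{i}$ onto the first coordinate is a loop in $M$ based at $P_{i}$ whose image lies in $\mathbb{D}$. Since $\mathbb{D}$ is contractible, $\mu_{i}$ is null-homotopic in $\mathbb{D}$; pushing that null-homotopy forward along the inclusion $\mathbb{D}\hookrightarrow M$ shows $\mu_{i}$ is null-homotopic in $M$ as well, i.e. $[\mu_{i}]= 1\in \pi_{1}(M)$ for every $i=1,\ldots,n$.

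Combining these two observations, $\hat{\theta}_{n}(\hat{\beta})= \theta_{n}(f_{n}(\alpha))= (1,\ldots,1)$, which means $\hat{\beta}\in \ker(\hat{\theta}_{n})$, and the inclusion $\mathrm{Im}(\hat{f}_{n})\subseteq \ker(\hat{\theta}_{n})$ follows. The only mild subtlety is the diagram chase itself (making sure to represent $\hat{\alpha}$ by a genuine braid $\alpha$ so that the top row of \eqref{diaiaiaiaiaiaaiia} can be applied); the geometric content, namely that loops trapped inside a disk are trivial in $\pi_{1}(M)$, is the expected and essentially only nontrivial input.
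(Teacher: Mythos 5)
Your proof is correct and follows essentially the same route as the paper: the identical diagram chase through \eqref{diaiaiaiaiaiaaiia} reduces the claim to showing $\theta_{n}\circ f_{n}=1$ on $PB_{n}(\mathbb{D})$. The only difference is that where the paper simply cites Goldberg for the inclusion ${\rm Im}(f_{n})\subseteq \ker(\theta_{n})$, you prove it directly from the contractibility of the embedded disk, which makes the argument self-contained without changing its structure.
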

\begin{proof} Let $\hat{\alpha} \in {\rm{Im}}(\hat{f}_{n})$. So, there is an element $\hat{\beta}\in \widehat{PB}_{n}(\mathbb{D})$ such that $\hat{\alpha}= \hat{f}_{n}(\hat\beta)$. 
Thus, $\hat{\theta}_{n}\circ\hat{f}_{n}(\hat{\beta})= \hat{\theta}_{n}(\hat{\alpha})$. Since $\hat\beta$ is in
$\widehat{PB}_{n}(\mathbb{D})$ and $p_{1}$ is surjective, there is $\beta$ in $PB_{n}(\mathbb{D})$ such that $p_{1}(\beta)= \hat{\beta}$. So, $\hat{\theta}_{{n}}\circ \hat{f}_{n} \circ p_{1}(\beta)= \hat{\theta}_{n}(\hat{\alpha})$, which implies, $\hat{\theta}_{n}\circ p_{2}\circ f_{n}(\beta)= \hat{\theta}_{n}(\hat{\alpha})$, i.e., $\theta_{n}\circ f_{n}(\beta)= \hat{\theta}_{n}(\hat{\alpha})$. By \cite{Goldberg}, we have ${\rm{Im}}(f_{n})\subseteq \ker(\theta_{n})$, i.e., $\hat{\theta}_{n}(\hat\alpha)=1$ and then, $\hat\alpha \in \ker(\hat{\theta}_{n})$.
\end{proof}

\begin{lemma}\label{lema3.3.4}
$\ker(\hat{\theta}_{n})\subseteq \langle {\rm{Im}}(\hat{f}_{n})\rangle^{N}$.
\end{lemma}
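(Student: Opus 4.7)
The plan is to carry out a straightforward diagram chase in the commutative diagram (\ref{diaiaiaiaiaiaaiia}), using Goldberg's original result for the top row as the crucial input together with the surjectivity of the vertical projections $p_{1}$ and $p_{2}$.

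First I would pick an arbitrary element $\hat{\alpha}\in \ker(\hat{\theta}_{n})$ inside $\widehat{PB}_{n}(M)$. Since $p_{2}\colon PB_{n}(M)\to \widehat{PB}_{n}(M)$ is surjective, I would choose a lift $\alpha\in PB_{n}(M)$ with $p_{2}(\alpha)=\hat{\alpha}$. Commutativity of the right-hand square of (\ref{diaiaiaiaiaiaaiia}) gives
\[
\theta_{n}(\alpha)\;=\;id\circ\theta_{n}(\alpha)\;=\;\hat{\theta}_{n}\circ p_{2}(\alpha)\;=\;\hat{\theta}_{n}(\hat{\alpha})\;=\;1,
\]
so the lift $\alpha$ lands in $\ker(\theta_{n})$. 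This is the key step where the hypothesis on $\hat{\alpha}$ is transferred up to the non-homotopy world.

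Next I would invoke Goldberg's theorem \cite[Theorem~1]{Goldberg}, which asserts that $\ker(\theta_{n})=\langle \mathrm{Im}(f_{n})\rangle^{N}$ inside $PB_{n}(M)$. Thus $\alpha$ can be written as a finite product
\[
\alpha \;=\; \prod_{j}\, g_{j}\, f_{n}(\beta_{j})^{\varepsilon_{j}}\, g_{j}^{-1},
\]
with $g_{j}\in PB_{n}(M)$, $\beta_{j}\in PB_{n}(\mathbb{D})$ and $\varepsilon_{j}\in\{\pm 1\}$.

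Finally I would project this identity down via $p_{2}$. Commutativity of the left-hand square gives $p_{2}\circ f_{n}=\hat{f}_{n}\circ p_{1}$, so
\[
\hat{\alpha}\;=\;p_{2}(\alpha)\;=\;\prod_{j}\, p_{2}(g_{j})\,\hat{f}_{n}(p_{1}(\beta_{j}))^{\varepsilon_{j}}\,p_{2}(g_{j})^{-1},
\]
which exhibits $\hat{\alpha}$ as a product of conjugates in $\widehat{PB}_{n}(M)$ of elements of $\mathrm{Im}(\hat{f}_{n})$. Hence $\hat{\alpha}\in \langle \mathrm{Im}(\hat{f}_{n})\rangle^{N}$, as required. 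I do not foresee any serious obstacle: the only delicate point is to ensure that Goldberg's original equality $\ker(\theta_{n})=\langle \mathrm{Im}(f_{n})\rangle^{N}$ is available in exactly the form stated for closed orientable surfaces of genus $g\geq 1$, which is precisely the hypothesis assumed on $M$ in Theorem \ref{Teorema3.3.1}.
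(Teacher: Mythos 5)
Your proposal is correct and follows essentially the same route as the paper: lift the element of $\ker(\hat{\theta}_{n})$ through the surjection $p_{2}$ to a pure braid in $PB_{n}(M)$, use commutativity of the diagram to see the lift lies in $\ker(\theta_{n})$, apply Goldberg's Theorem~1 to decompose it as a product of conjugates of elements of ${\rm{Im}}(f_{n})$, and push that decomposition back down via $p_{2}$. If anything, your write-up is slightly more explicit than the paper's at the step $\theta_{n}(\alpha)=1$, which the paper asserts without spelling out the diagram chase.
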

\begin{proof} Let $\hat{\gamma}$ be an element in $\ker(\hat{\theta}_{n})$. Thus, $\hat{\gamma} \in \widehat{PB}_{n}(M)$ and $\hat{\theta}_{n}(\hat\gamma)=1$. By Theorem \ref{Teorema2.1.2}, each string link is link homotopic to a pure braid. Let $\gamma \in PB_{n}(M)$ be such pure braid. So, ${\theta}_{n}(\gamma)=1$, where $1$ denotes the identity in $\pi_{1}(M)^{n}$. By \cite[Theorem 1]{Goldberg}, we have $\gamma= \prod_{k}{\alpha_{k}\beta_{k}\alpha_{k}^{-1}}$, with $\alpha_{k} \in PB_{n}(M), \beta_{k} \in {\rm{Im}}(f_{n})$, i.e., $\beta_{k}= f_{n}(\gamma_{k})$, for some $\gamma_{k} \in PB_{n}(\mathbb{D})$ with $p_{1}(\gamma_{k})= \hat{\gamma}_{k}$, and then, $\hat{f}_{n}\circ p_{1}(\gamma_{k})= \hat{f}_{n}(\hat{\gamma}_{k})$.
Since the diagram (\ref{diaiaiaiaiaiaaiia}) commutes, 
$$\hat{f}_{n}(\hat{\gamma}_{k})= \hat{f}_{n}\circ p_{1}(\gamma_{k})= p_{2}\circ f_{n}(\gamma_{k})= p_{2}(\beta_{k})= \hat{\beta}_{k},$$ i.e., $\hat{\beta}_{k} \in {\rm{Im}}(\hat{f}_{n}).$
Therefore, 
$$\hat{\gamma}= p_{2}(\gamma)= p_{2}\left(\prod_{k}{\alpha_{k}\beta_{k}\alpha_{k}^{-1}}\right)= \prod_{k}p_{2}(\alpha_{k})p_{2}(\beta_{k})p_{2}(\alpha_{k})^{-1}= \prod_{k}\hat{\alpha_{k}}\hat{\beta_{k}}\hat{\alpha_{k}}^{-1},$$ with $\hat{\alpha}_{k} \in \widehat{PB}_{n}(M), \hat{\beta}_{k} \in {\rm{Im}}(\hat{f}_{n})$. 
\end{proof}

\begin{proof}{\bf Proof of Theorem \ref{Teorema3.3.1}:} Follows from Lemmas \ref{lema3.3.2}, \ref{lema3.3.3}  and \ref{lema3.3.4}. 
\end{proof}

\section*{\bf{Acknowledgment:}} A special thanks to FAPESP- Fundação de Amparo a Pesquisa do Estado de São Paulo for the research financial support from the Phd´s process number $2011/22285-1$.

\small
\printindex
\normalsize


\begin{thebibliography}{99}
\bibitem{Artin} Artin, Emil. "Theory of braids." Annals of Mathematics (1947): 101-126.

\bibitem{Birman2}Birman, Joan S. "Braids, Links, and Mapping Class Groups.(AM-82), Volume 82." Braids, Links, and Mapping Class Groups.(AM-82), Volume 82. Princeton University Press, 2016.

\bibitem{Rolfsen} Dehornoy, Patrick, et al. Ordering braids. No. 148. American Mathematical Soc., 2008.

\bibitem{Goldberg} Goldberg, Charles H. "An exact sequence of braid groups." Mathematica Scandinavica 33.1 (1973): 69-82.

\bibitem{Goldsmith} Goldsmith, Deborah Louise. "Homotopy of braids—in answer to a question of E. Artin." Topology Conference. Springer, Berlin, Heidelberg, 1974.

\bibitem{Gonzales} González-Meneses, Juan. "New presentations of surface braid groups." Journal of Knot Theory and Its Ramifications 10.03 (2001): 431-451.

\bibitem{Gonzales2} González-Meneses, Juan. "Ordering pure braid groups on compact, connected surfaces." Pacific journal of mathematics 203.2 (2002): 369-378.

\bibitem{Habegger} Habegger, Nathan, and Xiao-Song Lin. "The classification of links up to link-homotopy." Journal of the American Mathematical Society (1990): 389-419.

\bibitem{Levine} Levine, Jerome P. "An approach to homotopy classification of links." Transactions of the American Mathematical Society 306.1 (1988): 361-387.

\bibitem{fisica} Lingua, Fabio, et al. "A topological signature of multipartite entanglement." arXiv preprint arXiv:1905.07454 (2019).

\bibitem{Milnor} Milnor, John. "Link groups." Annals of Mathematics (1954): 177-195.

\bibitem{Lima2} De Lima, Juliana Roberta Theodoro. "Homotopy of braids on surfaces: Extending Goldsmith’s answer to Artin." Journal of Knot Theory and Its Ramifications 28.12 (2019): 1950072.

\bibitem{Lima} Theodoro de Lima, Juliana R., and Denise de Mattos. "Ordering homotopy string links over surfaces." Journal of Knot Theory and Its Ramifications 25.01 (2016): 1650001.

\bibitem{Yurasovskaya} Yurasovskaya, Ekaterina. Homotopy string links over surfaces. Diss. University of British Columbia, 2008.
\end{thebibliography}
\end{document}